\date{\today}
\title[Vizing's Theorem and Tuza's Conjecture]{Maximal $k$-Edge-Colorable Subgraphs,\\ Vizing's Theorem,
and Tuza's Conjecture}
\author{Gregory J.~Puleo}
\address{Coordinated Science Lab, University of Illinois at Urbana-Champaign. Now at Department of Mathematics and Statistics, Auburn University.}
\tikzstyle{vertex}=[inner sep = 0pt, minimum width=4pt, fill=black, shape=circle]
\tikzstyle{squarevert}=[inner sep = 0pt, minimum width=4pt, minimum height=4pt, fill=white, shape=rectangle, draw=black, thick]
\newcommand{\gpoint}[2]{\node[style=vertex, label=#1:$#2$]}
\newcommand{\bpoint}[1]{\gpoint{below}{#1}}
\newcommand{\apoint}[1]{\gpoint{above}{#1}}
\newcommand{\GDelta}{G_{\Delta}}
\newcommand{\GDmu}{G_{\Delta,\mu}}
\newcommand{\Dmu}{\Delta^\mu}
\newcommand{\sey}{\mathcal{S}}
\newcommand{\join}{\vee}
\newcommand{\sizeof}[1]{\left\lvert{#1}\right\rvert}
\newcommand{\st}{\colon\,}
\newcommand{\apk}{\alpha'_k}
\newcommand{\Gstar}{G^*}
\newcommand{\dF}[1]{d^F(#1)}
\newcommand{\dM}[1]{d_M(#1)}
\newcommand{\dG}[1]{d_G(#1)}
\newcommand{\Fk}[1]{F(#1)}
\newcommand{\Uk}[1]{U(#1)}
\newtheorem{proposition}{Proposition}[section]
\newtheorem{conjecture}[proposition]{Conjecture}
\newtheorem{theorem}[proposition]{Theorem}
\newtheorem{lemma}[proposition]{Lemma}
\newtheorem{observation}[proposition]{Observation}
\newtheorem{corollary}[proposition]{Corollary}
\theoremstyle{definition}
\newtheorem{definition}[proposition]{Definition}
\theoremstyle{remark}
\begin{document}
\begin{abstract}
  We prove that if $M$ is a maximal $k$-edge-colorable subgraph of a
  multigraph $G$ and if $F = \{v \in V(G) \st d_M(v) \leq k-\mu(v)\}$,
  then $d_F(v) \leq d_M(v)$ for all $v \in V(G)$ with $d_M(v) < k$.
  (When $G$ is a simple graph, the set $F$ is just the set of vertices
  having degree less than $k$ in $M$.) This implies Vizing's Theorem as well
  as a special case of Tuza's Conjecture on packing and covering of
  triangles. A more detailed version of our result also implies
  Vizing's Adjacency Lemma for simple graphs.
\end{abstract}
\maketitle
\section{Introduction}
A \emph{proper $k$-edge-coloring} of a multigraph $G$ without loops is
a function $\psi : E(G) \to [k]$ such that $\psi(e) \neq \psi(f)$
whenever $e$ and $f$ are distinct edges sharing an endpoint (or both
endpoints), where $[k] = \{1, \ldots, k\}$. A graph is
\emph{$k$-edge-colorable} if it admits a proper $k$-edge-coloring. We
will tacitly assume in the rest of this paper that all multigraphs
under consideration are loopless.

A fundamental theorem concerning edge-coloring is Vizing's
Theorem~\cite{vizing}.  Given a multigraph $G$, we write $\mu_G(v,w)$
for the number of edges joining two vertices $v$ and $w$, and we write
$\mu_G(v)$ for $\max_{w \in V(G)} \mu_G(v,w)$. When the graph $G$ is understood, we
omit the subscripts.  We also write $\Delta(G)$ for the maximum degree
of $G$ and $\mu(G)$ for $\max_{v \in V(G)} \mu(v)$. Vizing's~Theorem
can then be stated as follows:
\begin{theorem}[Vizing~\cite{vizing}]\label{thm:real-vizing}
  If $G$ is a multigraph and $k \geq \Delta(G) + \mu(G)$, then $G$ is
  $k$-edge-colorable.
\end{theorem}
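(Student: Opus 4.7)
The plan is to derive Vizing's Theorem by contradiction, using the main result advertised in the abstract: if $M$ is a maximal $k$-edge-colorable subgraph of $G$ and $F = \{v \in V(G) \st \dM{v} \leq k - \mu(v)\}$, then $\dF{v} \leq \dM{v}$ for every vertex $v$ with $\dM{v} < k$. Granting that inequality, the deduction is just a handful of lines.

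Suppose $k \geq \Delta(G) + \mu(G)$ but $G$ is not $k$-edge-colorable, and let $M$ be a maximal $k$-edge-colorable subgraph. Then $E(M) \subsetneq E(G)$, so there is an edge $e = vw \in E(G) \setminus E(M)$. Under Vizing's hypothesis, any vertex $u$ incident to an edge of $G$ automatically belongs to $F$, since
\[ \dM{u} \leq \dG{u} \leq \Delta(G) \leq k - \mu(G) \leq k - \mu(u). \]
In particular, $v \in F$, and more importantly \emph{every} $G$-neighbor of $v$ lies in $F$.

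Now apply the main result to $v$: because $\dM{v} \leq k - \mu(v) \leq k - 1 < k$, we obtain $\dF{v} \leq \dM{v}$. On the other hand, since every $G$-neighbor of $v$ lies in $F$, every edge of $G$ incident to $v$ contributes to $\dF{v}$, giving $\dF{v} = \dG{v}$. The edge $e$ witnesses $\dG{v} \geq \dM{v} + 1$, contradicting the bound $\dF{v} \leq \dM{v}$. So no such edge $e$ can exist, $M = G$, and $G$ is $k$-edge-colorable.

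The main obstacle is not in this deduction but in the inequality $\dF{v} \leq \dM{v}$ itself; the analysis of maximal $k$-edge-colorable subgraphs needed to establish it is the true content of the paper. I expect that proof to require a careful local exchange argument of the Vizing-fan type, showing that the maximality of $M$ propagates uncolorability along neighbors of $v$ in $F$ and ultimately forces the promised constraint on how many such neighbors $v$ can have.
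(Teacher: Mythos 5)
Your deduction is correct and is essentially the paper's own route: the paper derives Vizing's Theorem via Ore's bound by observing that when $k \geq \Dmu(G)$ the set $F$ equals $V(G)$, so Theorem~\ref{thm:simple-main} forces $d_M(v) \geq d_F(v) = d_G(v)$ for every $v$ and hence $M = G$. Your version merely specializes to the weaker hypothesis $k \geq \Delta(G)+\mu(G)$ and phrases the same argument as a contradiction at an uncolored edge, and your check that $d_M(v) < k$ (via $\mu(v) \geq 1$) is sound.
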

Following the notation of \cite{stiebitz}, let
$\Dmu(G) = \max_{v \in V(G)}[d(v) + \mu(v)]$. Since
$\Dmu(G) \leq \Delta(G) + \mu(G)$ for any multigraph $G$, and since
this inequality is sometimes strict, the following theorem of
Ore~\cite{ore-fourcolor} strengthens Theorem~\ref{thm:real-vizing}.
\begin{theorem}[Ore~\cite{ore-fourcolor}]\label{thm:ore}
  If $G$ is a multigraph and $k \geq \Dmu(G)$, then $G$
  is $k$-edge-colorable.
\end{theorem}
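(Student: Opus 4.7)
The plan is to deduce Theorem~\ref{thm:ore} directly from the main technical result announced in the abstract. Let $M$ be a maximal $k$-edge-colorable subgraph of $G$, and set $F = \{v \in V(G) \st \dM{v} \leq k - \mu(v)\}$. It suffices to show that $M = G$, so suppose for contradiction that there is an edge $e = vw \in E(G) \setminus E(M)$.

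First I would observe that the hypothesis $k \geq \Dmu(G)$ places every non-isolated vertex of $G$ inside $F$: indeed, for such a vertex $u$ one has $\dM{u} \leq \dG{u} \leq k - \mu(u)$, so $u \in F$. In particular, the endpoints $v$ and $w$ of the missing edge $e$ both lie in $F$, and because $\mu(v) \geq 1$ we also have $\dM{v} \leq k - \mu(v) < k$. Thus the hypotheses of the main theorem are met at $v$, and it yields $\dF{v} \leq \dM{v}$.

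The crucial point is that, since $F$ contains every non-isolated vertex of $G$, every edge of $G$ incident to $v$ has its other endpoint in $F$, so $\dF{v} = \dG{v}$. Combined with the trivial inequality $\dM{v} \leq \dG{v}$, the conclusion of the main theorem then forces $\dM{v} = \dG{v}$, which contradicts the fact that $e$ is an edge of $G$ incident to $v$ that is missing from $M$.

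The main obstacle, clearly, is not this short deduction but the proof of the main theorem itself, which will occupy the bulk of the paper. I expect that argument to proceed by a Vizing-style fan-and-exchange analysis of the coloring of $M$: starting from a vertex $v$ with $\dM{v} < k$ and a hypothetical ``many'' edges to vertices of $F$, one builds an alternating structure whose recoloring enlarges $M$, contradicting its maximality.
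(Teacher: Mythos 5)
Your deduction is correct and is essentially the same as the paper's: the paper likewise observes that $k \geq \Dmu(G)$ forces $F = V(G)$, so Theorem~\ref{thm:simple-main} gives $d_G(v) = d_F(v) \leq d_M(v)$ at any vertex with $d_M(v) < k$, whence $M = G$. Your framing via a hypothetical missing edge and restriction to non-isolated vertices is a cosmetic variation only.
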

In this paper, we prove the following generalization of Theorem~\ref{thm:ore}. Here, when $F \subset V(G)$, we write $d_F(v)$ for $\sum_{w
  \in F}\mu(v,w)$, and when $M \subset E(G)$, we write $d_M(v)$ for
the total number of $M$-edges incident to $v$.
\begin{theorem}\label{thm:simple-main}
  Let $G$ be a multigraph, let $k \geq 1$, and let $M$ be a maximal
  $k$-edge-colorable subgraph of $G$.  If $F = \{v \in V(G) \st d_M(v)
  \leq k - \mu(v)\}$, then for every $v \in V(G)$ with $d_M(v) < k$,
  we have $d_F(v) \leq d_M(v)$.
\end{theorem}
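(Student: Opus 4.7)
The plan is to argue by contradiction. Suppose some $v$ with $d_M(v) < k$ satisfies $d_F(v) > d_M(v)$; I will use this to extend $M$ by a new edge, contradicting maximality. Fix a proper $k$-edge-coloring $\psi$ of $M$, and for each vertex $x$, let $\overline{\psi}(x) = [k] \setminus \psi(E_M(x))$ be the set of colors missing at $x$, so that $|\overline{\psi}(v)| \geq 1$ and $|\overline{\psi}(w)| \geq \mu(w)$ for every $w \in F$. The elementary observation used throughout is that maximality forces $\overline{\psi}(v) \cap \overline{\psi}(w) = \emptyset$ for every edge $vw \in E(G) \setminus E(M)$; otherwise a common missing color would let us add $vw$ to $M$. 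In particular, for each $w \in F$ with a non-$M$ edge to $v$, $\overline{\psi}(w) \subseteq \psi(E_M(v))$.

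Since $d_F(v) > d_M(v)$ exceeds the number of $M$-edges from $v$ to $F$, at least one non-$M$ edge $vw_0$ with $w_0 \in F$ exists. Starting from this edge I would build a Vizing-style fan at $v$: a sequence of distinct vertices $w_0, w_1, \ldots$ and distinct colors $\alpha_1, \alpha_2, \ldots$ with $\alpha_{i+1} \in \overline{\psi}(w_i) \setminus \{\alpha_1, \ldots, \alpha_i\}$ and $\psi(vw_{i+1}) = \alpha_{i+1}$. If at any stage $\overline{\psi}(w_r) \cap \overline{\psi}(v) \neq \emptyset$, the standard fan shift---recolor $vw_r$ with a color from this intersection, cascade $vw_i \mapsto \alpha_{i+1}$ for $i = r-1, \ldots, 1$, and finally color $vw_0$ with $\alpha_1$---produces a proper $k$-edge-coloring of $M \cup \{vw_0\}$, contradicting maximality. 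Hence every fan vertex $w_i$ must satisfy $\overline{\psi}(w_i) \subseteq \psi(E_M(v))$.

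The fan can terminate either by escaping $F$ (reaching some $w_r \notin F$, where $|\overline{\psi}(w_r)| \geq \mu(w_r)$ may fail) or by stalling inside $F$ at $w_r$ with $\overline{\psi}(w_r) \subseteq \{\alpha_1, \ldots, \alpha_{r-1}\}$. The main obstacle will be the stall case. I would handle it by a Kempe-chain swap between $\alpha_0 \in \overline{\psi}(v)$ and a repeated fan color $\alpha_j \in \overline{\psi}(w_r)$: if the $\alpha_0\alpha_j$-component of $M$ through $v$ avoids $w_r$, swapping colors on the separate component through $w_r$ moves $\alpha_0$ into $\overline{\psi}(w_r)$ and unlocks the shift above; if the two components coincide, the classical Vizing trick of retreating to the fan vertex $w_{j-1}$ (which already misses $\alpha_j$) and running a truncated shift yields the extension instead. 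The principal technical subtlety is bookkeeping in the multigraph setting, where the bound $|\overline{\psi}(w)| \geq \mu(w)$ supplies exactly the slack needed for the fan and Kempe swaps to proceed without fan vertices collapsing in the presence of parallel edges between $v$ and fan vertices.
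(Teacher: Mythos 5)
There is a genuine gap, and it is structural rather than technical. Your argument uses the hypothesis $d_F(v) > d_M(v)$ only once, to extract a single uncolored edge $vw_0$ with $w_0 \in F$, and from that point on you try to derive a contradiction with maximality using only that one edge. But a maximal $M$ can perfectly well admit such an edge: take $k=2$, $G$ a triangle, and $M$ a path of length $2$ with endpoints $v$ and $w_0$; then $v, w_0 \in F$, the edge $vw_0$ is uncolored, $d_M(v) = 1 < k$, and yet $M$ is maximal (here $d_F(v) = 1 = d_M(v)$, so the theorem is not violated). Consequently no procedure that sees only $vw_0$ and the fan at $v$ can always produce an extension of $M$; your proof never explains why the procedure succeeds precisely when $d_F(v) > d_M(v)$. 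The concrete failure point is the case you label ``escaping $F$'' and then set aside: the fan can reach a vertex $w_r$ with $d_M(w_r) = k$, so that $\overline{\psi}(w_r) = \emptyset$ and there is no color with which to extend the fan, downshift, or run a Kempe swap. In the triangle example this happens immediately at $w_1$. This case is not a side issue to be deferred --- it is exactly where the single-fan recoloring strategy breaks down, and handling it is the entire content of the theorem.

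The paper's proof avoids this trap by not attempting a recoloring at all. It uses the \emph{non}-existence of a recoloring (maximality, fed through Kostochka's reachability digraphs $H_u$ and the alternating-path lemmas, which package the fan and Kempe-chain ideas you describe) to assign to each $M$-neighbor $z$ of $v$ a color set $C(z)$: the missing colors $O(z)$ if $z$ is reachable from some uncolored edge, and the colors $\psi(v,z)$ on the edges $vz$ if $z$ is ``remote'' --- this is precisely how the stuck vertices with $O(z) = \emptyset$ are absorbed. Lemma~\ref{lem:disjoint} shows all these sets, together with $O(v)$, are pairwise disjoint, and since each $z \in \Fk{v}$ satisfies $\sizeof{C(z)} \geq \mu_G(v,z)$, counting inside $[k]$ gives $\dF{v} \leq k - \sizeof{O(v)} = d_M(v)$. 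The hypothesis on $F$ enters only in this final count over \emph{all} $F$-neighbors of $v$ simultaneously (and over all uncolored edges at $v$, not just one). To repair your approach you would have to convert the fan from a recoloring device into a counting device of this kind; as written, the proposal does not prove the statement.
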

Theorem~\ref{thm:simple-main} is easiest to understand in the case of
simple graphs, where $\mu(v)=1$ for all $v$. In this case, $F$ is just
the set of all vertices with fewer than $k$ colors present on the
incident edges, that is, the set of all vertices missing at least one
color\footnote{The letter $F$ is meant to evoke the word
``de\textbf{f}icient'', the letter $D$ being unavailable since it is
used in a different context in this paper.}.

It is also instructive to consider Theorem~\ref{thm:simple-main} in the
cases $k=1$ and $k=2$. Since a maximal matching in a graph $G$ is just a maximal
$1$-edge-colorable subgraph of $G$, the $k=1$ case of
Theorem~\ref{thm:simple-main} just states the observation that the set
of vertices left uncovered by a maximal matching is independent.

In the case $k=2$, we can observe that in a maximal $2$-edge-colorable
subgraph $M \subset G$, every component of $M$ is an even cycle or a
path (possibly a $1$-vertex path), and the vertices of $F$ are the endpoints of the path
components. Theorem~\ref{thm:simple-main} then states that $G[F]$
induces a graph consisting of a matching together with possibly some
isolates, where all vertices isolated in $M$ are also isolated in
$G[F]$. This conclusion is not difficult to prove directly, as the
maximality of $M$ implies that the only $G$-edges among the vertices
of $F$ are edges that join the the endpoints of the same path, if this
would yield an odd cycle.

For $k > 2$, no simple characterization of $k$-edge colorable graphs
is known, so a direct appeal to the structure of $M$ is not
possible. However, Theorem~\ref{thm:simple-main} still yields the
following corollary.
\begin{corollary}\label{cor:maxdelta}
  If $G$ is a simple graph, $M$ is a maximal $k$-edge-colorable
  subgraph of $G$, and $F$ is the set of vertices with fewer than $k$
  incident $M$-edges, then $\Delta(G[F]) \leq k-1$.
\end{corollary}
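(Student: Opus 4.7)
The plan is to derive this as an essentially immediate consequence of Theorem~\ref{thm:simple-main}, since the hypothesis of the corollary is just the specialization of the theorem to the case when $G$ is simple. The only real content is matching up the quantities correctly and handling a small edge case.

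First I would observe that in a simple graph $\mu(v)=1$ for every $v$, so the condition $d_M(v)\leq k-\mu(v)$ becomes $d_M(v)\leq k-1$, which is the same as $d_M(v)<k$. Hence the set $F$ appearing in Theorem~\ref{thm:simple-main} coincides with the set $F$ defined in the corollary. Moreover, because $G$ is simple, $\mu_G(v,w)\in\{0,1\}$, so for any $v$ the quantity $d_F(v)=\sum_{w\in F}\mu_G(v,w)$ is exactly the number of $F$-neighbors of $v$ in $G$, i.e., $d_{G[F]}(v)$ whenever $v\in F$.

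Now for any $v\in F$ I would simply apply Theorem~\ref{thm:simple-main}: since $d_M(v)\leq k-1<k$, we get $d_F(v)\leq d_M(v)\leq k-1$. By the identification above, this says $d_{G[F]}(v)\leq k-1$ for every $v\in F$, which is the desired bound on $\Delta(G[F])$.

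There is no real obstacle; the work is entirely in Theorem~\ref{thm:simple-main} itself. The only mild subtlety worth stating carefully in the write-up is that the corollary's $F$ is defined using the strict inequality $d_M(v)<k$, while the theorem's $F$ is defined using $d_M(v)\leq k-\mu(v)$, and one must note that these agree when $\mu\equiv 1$ so that the theorem's conclusion can be invoked for every $v\in F$.
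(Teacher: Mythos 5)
Your proposal is correct and is exactly the derivation the paper intends: the corollary is stated as an immediate consequence of Theorem~\ref{thm:simple-main}, and your identification of the two definitions of $F$ when $\mu\equiv 1$, together with the observation that $d_F(v)=d_{G[F]}(v)\leq d_M(v)\leq k-1$ for $v\in F$, is the whole argument. Nothing is missing.
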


To see that Theorem~\ref{thm:simple-main} implies
Theorem~\ref{thm:ore}, observe that if $k \geq \Dmu(G)$ and $M$ is a
maximal $k$-edge-colorable subgraph of $G$, then $F = V(G)$, so
Theorem~\ref{thm:simple-main} states that $d_M(v) \geq d_G(v)$ for
every vertex $v$. As $M$ is a subgraph of $G$, this implies $M=G$, so
that $G$ is $k$-edge-colorable.  In Section~\ref{sec:forest}, we show
that Theorem~\ref{thm:simple-main} also implies a multigraph version
of a strengthening of Vizing's Theorem due to Lovasz and
Plummer~\cite{lovasz-plummer} and to Berge and
Fournier~\cite{berge-fournier}.

In order to prove Theorem~\ref{thm:simple-main}, we actually prove a
more technical version of the theorem, with a somewhat stronger
conclusion. This version of Theorem~\ref{thm:simple-main} is similar
to Vizing's Adjacency Lemma, and we explore the connection in more
detail in Section~\ref{sec:VAL}.
\begin{definition}
  Given a multigraph $G$, a subgraph $M \subset G$, and an integer $k \geq 1$,
  for each $v \in V(G)$ we define vertex sets $\Fk{v}$ and $\Uk{v}$ by
  \begin{align*}
    \Fk{v} &= \{w \in N(v) \st d_M(w) \leq k - \mu_G(v,w)\}, \\
    \Uk{v} &= \{w \in \Fk{v} \st \mu_M(v,w) < \mu_G(v,w)\}.
  \end{align*}
  We also write $\dF{v}$ for $d_{\Fk{v}}(v)$, that is, $\dF{v}$ is the total number
  of edges from $v$ to the vertices in $\Fk{v}$. The superscript here is meant to emphasize
  that the $F$ in this notation is a set depending on $v$, rather than being a fixed set
  as in Theorem~\ref{thm:simple-main}. Figure~\ref{fig:fu-illustration} illustrates
  the definition of $\Fk{v}$ and $\Uk{v}$.
\end{definition}
\begin{figure}
  \centering
  \begin{tikzpicture}[xscale=2]
  \bpoint{v} (v) at (0cm, 0cm) {};
  \apoint{} (x0) at (-1cm, 1cm) {};
  \apoint{} (x1) at (0cm, 1cm) {};
  \apoint{} (x2) at (1cm, 1cm) {};
  \apoint{} (x3) at (-1cm, -1cm) {};
  \apoint{} (x4) at (1cm, -1cm) {};
  \draw (v) -- (x0);
  \draw (v) .. controls ++(135:.3cm) and ++(-135:.3cm) .. (x1);
  \draw[very thick] (v) .. controls ++(45:.3cm) and ++(-45:.3cm) .. (x1);
  \draw[very thick] (v) -- (x2);
  \draw[very thick] (v) .. controls ++(0:.5cm) and ++(-90:.5cm) .. (x2);
  \draw (v) -- (x3);
  \draw[very thick] (x3) -- ++(-.3cm, 0cm);
  \draw[very thick] (x3) -- ++(-.3cm, -.3cm);
  \draw[very thick] (x3) -- ++(.3cm, -.3cm);
  \draw[very thick] (x3) -- ++(.3cm, 0cm);
  \draw[very thick] (x0) -- ++(-.3cm, .3cm);
  \draw[very thick] (x0) -- ++(0cm, .3cm);
  \draw[very thick] (x0) -- ++(.3cm, .3cm);
  \draw (x1) -- ++(-.3cm, .3cm);
  \draw[very thick] (x1) -- ++(.3cm, .3cm);
  \draw (x0) -- (x4);
  \draw (v) .. controls ++(0:.5cm) and ++(90:.5cm) .. (x4);
  \draw[very thick] (x4) -- ++(-.3cm, -.3cm);
  \draw[very thick] (x4) -- ++(0cm, -.3cm);
  \draw[very thick] (x4) -- ++(.3cm, -.3cm);
  \draw (-1.2cm, 1.2cm) rectangle (.2cm, .8cm); \node[anchor=east] at (-1.2cm, 1cm) {$\Uk{v}$};
  \draw (-1.8cm, 1.4cm) rectangle (1.2cm, .6cm); \node[anchor=west] at (1.2cm, 1cm) {$\Fk{v}$};
  \end{tikzpicture}
  \caption{Illustration of $\Fk{v}$ and $\Uk{v}$ for a vertex $v$, in the case $k=4$. Thick edges denote edges in $M$;
  vertices have no incident edges aside from those pictured.}
  \label{fig:fu-illustration}
\end{figure}
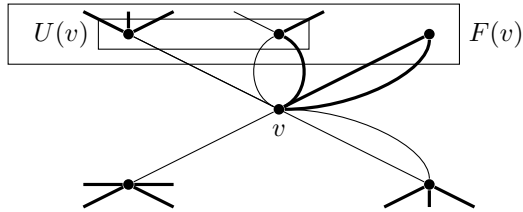
\begin{theorem}\label{thm:main}
  Let $G$ be a multigraph, let $k \geq 1$, and let $M$ be a maximal
  $k$-edge-colorable subgraph of $G$. For every $v \in V(G)$ with
  $d_M(v) < k$, we have
 \[ \dF{v} \leq d_M(v) - \sum_{w \in \Uk{v}}(k - d_M(w) - \mu_G(v,w)). \]
\end{theorem}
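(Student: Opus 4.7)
Fix a proper $k$-edge-coloring $\psi$ of $M$, and for each $u \in V(G)$ let $A(u) \subseteq [k]$ denote the set of colors missing at $u$; then $|A(u)| = k - \dM{u}$. Using these relations, the desired inequality rearranges to
\[
|A(v)| + \sum_{w \in \Fk{v} \setminus \Uk{v}} \mu_G(v,w) + \sum_{w \in \Uk{v}} |A(w)| \leq k.
\]
The approach is to exhibit a family of pairwise-disjoint subsets of $[k]$ whose sizes account for the left-hand side. For each $w \in \Fk{v} \setminus \Uk{v}$, let $C(v,w) \subseteq [k]$ be the set of colors $\psi$ assigns to the $\mu_G(v,w)$ $M$-edges between $v$ and $w$; properness of $\psi$ at $v$ gives $|C(v,w)| = \mu_G(v,w)$. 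The candidate family consists of $A(v)$, together with $C(v,w)$ for each $w \in \Fk{v} \setminus \Uk{v}$ and $A(w)$ for each $w \in \Uk{v}$; pairwise disjointness immediately yields the reformulated inequality by counting.

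\textbf{Routine disjointness.} Four of the required disjointness statements follow directly. Properness of $\psi$ at $v$ gives $A(v) \cap C(v,w) = \emptyset$ and $C(v,w_1) \cap C(v,w_2) = \emptyset$ for distinct $w_1, w_2$. For $w \in \Uk{v}$, any color in $A(v) \cap A(w)$ could be placed on an uncolored $vw$-edge, contradicting maximality of $M$. For distinct $w_1, w_2 \in \Uk{v}$ and $\alpha \in A(w_1) \cap A(w_2)$, the color $\alpha$ must be present at $v$ (else color $vw_1$ with $\alpha$); fixing any $\beta \in A(v)$, the $(\alpha,\beta)$-Kempe chain through $v$ is a path from $v$ to some vertex $x$. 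Since $\alpha$ is missing at each $w_i$, neither can be an interior vertex of this chain; since $w_1 \neq w_2$, at least one of them, say $w_j$, is not on the chain at all. Swapping colors along the chain leaves $\alpha$ missing at $v$ and still missing at $w_j$, so $vw_j$ can then be colored $\alpha$, contradicting maximality.

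\textbf{The hard case.} The remaining pair, $A(w_1) \cap C(v,w_2) = \emptyset$ for $w_1 \in \Uk{v}$ and $w_2 \in \Fk{v} \setminus \Uk{v}$, is the crux. If $\alpha \in A(w_1) \cap C(v,w_2)$ with $\alpha = \psi(vw_2)$ and $\beta \in A(v)$, then the $(\alpha,\beta)$-chain through $v$ starts along $vw_2$ and, in the worst case, terminates at $w_1$; then the swap puts an $\alpha$-edge at $w_1$ and $vw_1$ cannot be colored directly. I plan to handle this via a Vizing-style fan argument: starting from the uncolored edge $vw_1$, inductively build a sequence $w_1, w_2, f_2, f_3, \ldots$ of neighbors of $v$ with $\psi(vf_i) \in A(f_{i-1})$. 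Because $v$ has only finitely many neighbors, this either encounters a fan color lying in $A(v)$---triggering a cascade of shifts along the fan that extends $M$---or the fan closes and a terminal $(\alpha,\beta)$-swap produces such an extension, contradicting maximality in either case.

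\textbf{Main obstacle.} The main obstacle is this last disjointness case: a single Kempe swap need not suffice, and the proof must combine a fan construction with a terminal Kempe swap to yield an extension of $M$. Arranging the fan to terminate and the terminal swap to be compatible with the shifts along the fan is the delicate step, analogous to the fan-and-swap technique behind Vizing's Adjacency Lemma.
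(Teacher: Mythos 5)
Your reduction to a disjoint-family statement is correct, and your four ``routine'' disjointness cases are argued correctly. But the fifth claim --- that $A(w_1)\cap C(v,w_2)=\emptyset$ for $w_1\in \Uk{v}$ and $w_2\in \Fk{v}\setminus \Uk{v}$ --- is not merely the hard case: it is false, so no fan-and-swap argument can complete your proof. Here is a counterexample with $k=3$. Let $D$ denote $K_4$ minus an edge; call its two degree-$2$ vertices its \emph{terminals}, and note that in every proper $3$-edge-coloring of $D$ both terminals miss the same single color, namely the color of the edge joining the two degree-$3$ vertices. Take disjoint copies $D_1$ (terminals $a_1,a_2$) and $D_2$ (terminals $b_1,b_2$), add new vertices $v,w_1,w_2$, and add the edges $vw_1$, $vw_2$, $va_1$, $w_1a_2$, $w_1b_1$, $w_2b_2$; call the result $G$ and set $M=G-vw_1$. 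In any proper $3$-edge-coloring of $G$, the two edges leaving $D_1$ receive a common color $\alpha$ and the two edges leaving $D_2$ receive a common color $\beta$; properness at $w_1$ forces $\alpha\neq\beta$ and $\psi(vw_1)=\gamma$ (the third color), then properness at $v$ forces $\psi(vw_2)=\beta$, contradicting properness at $w_2$. So $G$ is not $3$-edge-colorable, while $M$ is (take $\alpha=1$, $\beta=2$, $\psi(vw_2)=3$, and extend into the gadgets); hence $M$ is a maximal $3$-edge-colorable subgraph of $G$. Here $d_M(v)=2<3$, $w_1\in \Uk{v}$, and $w_2\in \Fk{v}\setminus \Uk{v}$ (it has degree $2$ and its only edge to $v$ lies in $M$). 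But in \emph{every} proper $3$-edge-coloring of $M$, the edge $vw_2$ must avoid both $\psi(va_1)=\alpha$ and $\psi(w_2b_2)=\beta$, so $\psi(vw_2)=\gamma$; meanwhile $w_1$ sees exactly $\alpha$ (on $w_1a_2$) and $\beta$ (on $w_1b_1$), so $A(w_1)=\{\gamma\}$. Thus $A(w_1)\cap C(v,w_2)=\{\gamma\}\neq\emptyset$ no matter which coloring $\psi$ you fix.

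The paper escapes this by making the set assigned to such a $w_2$ depend on the coloring rather than always taking $\psi(v,w_2)$. Following Kostochka, it builds for each $u\in\Uk{v}$ a digraph $H_u$ on $N_M(v)\cup\{u\}$ with an arc from $w$ to $z$ whenever $O(w)\cap\psi(v,z)\neq\emptyset$, and it assigns to $z$ the color set $\psi(v,z)$ only when $z$ is \emph{remote} (unreachable from every $u$); otherwise it assigns $O(z)$, which still has size at least $\mu_G(v,z)$ precisely because $z\in\Fk{v}$. Your problematic configuration is exactly an arc $w_1\to w_2$, which makes $w_2$ non-remote, whereupon the paper switches to $O(w_2)$ --- in the example above $O(w_2)=\{\alpha\}$, which is disjoint from $A(w_1)=\{\gamma\}$ and $A(v)=\{\beta\}$. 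Disjointness of this adaptive family is then proved from Kostochka's reachability lemmas (a reachable vertex shares no missing color with $v$, and two such vertices cannot both be the far endpoint of the same alternating path starting at $v$), not from a Vizing fan at $v$. So the repair is not to push the fan construction harder but to change which sets you pack into $[k]$.
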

Note that since $\Uk{v} \subset \Fk{v}$ by definition, we have
$d_M(w) \leq k - \mu_G(v,w)$ for all $w \in \Uk{v}$, so that each term
$k - d_M(w) - \mu_G(v,w)$ in the above sum is nonnegative.
Furthermore, when $F_0$ is the set defined in
Theorem~\ref{thm:simple-main}, we see that
$(N(v) \cap F_0) \subset \Fk{v}$ for all $v \in V(G)$. Thus,
Theorem~\ref{thm:main} indeed strengthens
Theorem~\ref{thm:simple-main}.

We now consider a conjecture of Tuza regarding packing and covering
of triangles.
\begin{definition}
  Given a graph $G$, let $\tau(G)$ denote the minimum size of an edge
  set $X$ such that $G-X$ is triangle-free, and let $\nu(G)$ denote
  the maximum size of a set of pairwise edge-disjoint triangles in
  $G$.
\end{definition}
It is easy to show that $\nu(G) \leq \tau(G) \leq 3\nu(G)$: if $\sey$
is a largest set of pairwise edge-disjoint triangles, then to make $G$
triangle-free we must delete at least one edge from each triangle of
$\sey$, and on the other hand deleting all edges contained in
triangles of $\sey$ will always make $G$ triangle-free. Tuza
conjectured a stronger upper bound.
\begin{conjecture}[Tuza's Conjecture~\cite{TuzaProc,Tuza}]
  $\tau(G) \leq 2\nu(G)$ for all graphs $G$.
\end{conjecture}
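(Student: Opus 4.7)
The plan is to attack Tuza's conjecture by applying Theorem~\ref{thm:main} to an auxiliary multigraph encoding the triangle structure of $G$. Since Tuza's conjecture has been open since 1981 and has resisted many edge-coloring and fractional attacks, any plan for the full conjecture is necessarily speculative; my aim here is to describe how the edge-coloring machinery of this paper would naturally be deployed and to flag where the approach is likely to stall.

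Fix a maximum edge-disjoint triangle packing $\sey$ in $G$, so $|\sey| = \nu(G)$. A cover of size $2\nu(G)$ is equivalent to selecting two edges from each $T \in \sey$ --- equivalently, designating one edge $\epsilon(T) \in T$ as \emph{omitted} --- so that every residual triangle (a triangle not in $\sey$) has at least one of its edges selected. By maximality of $\sey$, every residual triangle shares at least one edge with some member of $\sey$, so the constraints on $\epsilon$ are local in the auxiliary multigraph $H$ whose vertex set is $\sey$ and whose edges record shared edges between packed triangles, with multiplicity recording multiple shared edges.

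To bring Theorem~\ref{thm:main} to bear, one would formulate the selection problem as finding a maximal $k$-edge-colorable subgraph $M$ of a suitable enlargement of $H$ (likely with $k=2$), where the colored edges of $M$ correspond to selection patterns $\epsilon$ that are consistent with hitting the residual triangles. The conclusion $d_F(v_T) \leq d_M(v_T)$ at vertices of small $M$-degree is intended to rule out any $T \in \sey$ whose omitted edge $\epsilon(T)$ lies in an uncovered residual triangle: the deficiency forced at $v_T$ would have to be balanced by incidences with other packed triangles, and a careful accounting should push the required cover size below $2\nu(G)$, contradicting the optimality of $\sey$.

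The main obstacle, and the step I expect this plan to break on, is the \emph{scattered} residual triangle whose three edges lie in three distinct members of $\sey$. Such a residual triangle imposes a three-vertex constraint on the selection $\epsilon$, whereas Theorem~\ref{thm:main} is essentially a two-vertex (degree) condition in the auxiliary graph $H$: it bounds interactions along single edges of $H$, not along triples of vertices. Handling scattered residuals therefore appears to require either a hypergraph extension of Theorem~\ref{thm:main}, which this paper does not prove, or a separate combinatorial argument that absorbs scattered configurations into the degree framework. Until one of these is supplied, I do not see how to push the approach from a special case to the full conjecture, which is consistent with the abstract's claim of proving only a special case rather than the statement as worded.
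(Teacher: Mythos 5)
The statement you are trying to prove is an open conjecture: the paper does not prove it and does not claim to. It proves only the special case $\tau(I_k \join H) \leq 2\nu(I_k \join H)$ for triangle-free $H$ (Theorem~\ref{thm:trifree-tuza}), via Theorem~\ref{thm:tuzaconnection} and Theorem~\ref{thm:alphi}, so there is no "paper's own proof" to compare against. Your proposal is, by your own admission, a plan that stalls, and it should be judged as such.

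Beyond that, your opening reduction is already unsound, independently of the "scattered residual triangle" obstacle you flag later. You assert that a cover of size $2\nu(G)$ is \emph{equivalent} to selecting two edges from each $T \in \sey$ so that every residual triangle is hit. This is false in both directions, and $K_4$ --- the paper's own sharpness example --- is a counterexample to the direction you actually need. In $K_4$ we have $\nu = 1$, so $\sey$ consists of a single triangle $T_0$; the three remaining triangles of $K_4$ each share exactly one edge with $T_0$, and those three shared edges are precisely the three edges of $T_0$. Hence no choice of two edges from $T_0$ covers all residual triangles, yet $\tau(K_4) = 2 = 2\nu(K_4)$ is achieved by a cover (a perfect matching) that uses an edge \emph{outside} $T_0$. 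So the class of covers you propose to search over does not always contain a cover of size $2\nu(G)$ even when one exists, and the subsequent auxiliary-multigraph construction inherits this defect. Together with the three-vertex constraint problem you correctly identify (which is genuinely a hypergraph condition that Theorem~\ref{thm:main}, a two-vertex degree statement, cannot express), the approach does not yield the conjecture. If you want to use this paper's machinery, the productive direction is the one the paper takes: restrict to $G = I_k \join H$ with $H$ triangle-free, where triangle packings literally \emph{are} $k$-edge-colorable subgraphs of $H$ and covers correspond to the quantity $k\sizeof{V(H)} - \phi_k(H)$, so that Theorem~\ref{thm:simple-main} applies directly.
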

Tuza's Conjecture is sharp, if true; as observed by Tuza~\cite{Tuza},
equality in the upper bound is achieved by any graph whose blocks are
all isomorphic to $K_4$, among other examples. The best general upper
bound on $\tau(G)$ in terms of $\nu(G)$ is due to
Haxell~\cite{Haxell}, who showed that $\tau(G) \leq 2.87\nu(G)$ for
all graphs $G$. Tuza's Conjecture has been studied by many 
authors, who proved the conjecture for special classes of graphs
\cite{SashaK4,Krivelevich,Puleo,LBT,LBT-perfect} or studied various
fractional relaxations of the conjecture
\cite{CDMMS,HaxellRodl,SashaStability,Krivelevich}.

A major theme of the author's previous work on Tuza's
Conjecture~\cite{Puleo} is to reduce questions about triangle packings
to questions about matchings, since matchings are very well
understood. To further pursue this idea, we study the conjecture on graphs
of the form $I_k \join H$, where $I_k$ is an independent set of size
$k$, $H$ is a triangle-free graph, and the \emph{join} $G_1 \join G_2$
of two graphs $G_1$ and $G_2$ is obtained from the disjoint union of
$G_1$ and $G_2$ by adding all possible edges between $V(G_1)$ and
$V(G_2)$. Each triangle of $I_k \join H$ consists of an edge in $H$
together with a vertex of $I_k$; thus, triangle packings in $I_k \join
H$ correspond to partial $k$-edge-colorings of $H$. In
Section~\ref{sec:tuza}, we prove a similar correspondence for edge
sets whose deletion results in a triangle-free graph, and we use
Theorem~\ref{thm:simple-main} to prove the following special case of
Tuza's~Conjecture.
\begin{theorem}\label{thm:trifree-tuza}
  If $H$ is triangle-free and $k \geq 1$, then $\tau(I_k \join H) \leq 2\nu(I_k \join H)$.
\end{theorem}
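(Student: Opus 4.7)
The plan is to translate both sides of the inequality into statements about $H$ alone, and then invoke Theorem~\ref{thm:simple-main}. Since $I_k$ is independent and $H$ is triangle-free, every triangle of $I_k \join H$ has the form $(v, xy)$ with $v \in I_k$ and $xy \in E(H)$. Coloring each edge $e$ of a packed triangle $(v_i, e)$ with the label $i$ yields a partial proper $k$-edge-coloring of $H$, and this correspondence gives $\nu(I_k \join H) = \apk(H)$, where $\apk(H)$ denotes the maximum number of edges in a $k$-edge-colorable subgraph of $H$.

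For covers, I would decompose any triangle cover as $X = X_H \cup X_I$ with $X_H \subseteq E(H)$ and $X_I$ a set of join edges, and set $N_v = \{x \in V(H) \st vx \in X_I\}$ for each $v \in I_k$. Then $X$ covers the triangle $(v, xy)$ iff $xy \in X_H$ or $\{x, y\} \cap N_v \neq \emptyset$, so $X$ is a valid triangle cover iff $N_v$ is a vertex cover of $H - X_H$ for every $v \in I_k$.

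For the construction, let $M$ be a maximum $k$-edge-colorable subgraph of $H$, set $F = \{w \in V(H) \st d_M(w) < k\}$ and $A = V(H) \setminus F$, and take $X_H = E(H[F])$ together with $N_v = A$ for every $v$. Every edge of $H$ either lies in $H[F]$ (and so is in $X_H$) or has an endpoint in $A$ (and so is covered by $N_v$), making $X$ a valid triangle cover with $|X| = |E(H[F])| + k|A|$. Since every $v \in A$ has $d_M(v) = k$, we get $k|A| = \sum_{v \in A} d_M(v) = 2|E(M)| - \sum_{v \in F} d_M(v)$. Applying Theorem~\ref{thm:simple-main} to $M \subseteq H$ (where $\mu_H \equiv 1$) gives $d_F(v) \leq d_M(v)$ for every $v \in F$, hence $2|E(H[F])| = \sum_{v \in F} d_F(v) \leq \sum_{v \in F} d_M(v)$. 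Combining,
\[ |X| \leq 2|E(M)| - \tfrac{1}{2}\sum_{v \in F} d_M(v) \leq 2|E(M)| = 2\apk(H) = 2\nu(I_k \join H). \]

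The main obstacle is selecting an efficient enough cover: taking $X_H = E(H)$ or using a minimum vertex cover of all of $H$ for each $N_v$ yields bounds too weak for Tuza. The key insight is to let the $M$-saturated set $A$ absorb the join-edge portion of the cover while leaving only the sparse subgraph $H[F]$ to be deleted directly, since Theorem~\ref{thm:simple-main} exactly controls the density of $H[F]$ in terms of the $M$-degrees on $F$.
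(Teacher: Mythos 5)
Your proof is correct and follows essentially the same route as the paper: the paper likewise identifies $\nu(I_k \join H)$ with $\alpha'_k(H)$, builds a triangle cover of the form $E(H[D])$ together with all join edges to $V(H)\setminus D$, and bounds its size by combining the degree-sum formula with Theorem~\ref{thm:simple-main} applied to the deficient set $F$ of a maximal $k$-edge-colorable subgraph. The only difference is cosmetic: you take $D = F$ directly, whereas the paper routes the same computation through the quantity $\phi_k(D) = k\sizeof{D} - \sizeof{E(G[D])}$ and $k$-optimal sets, which additionally yields the exact value $\tau(I_k \join H) = k\sizeof{V(H)} - \phi_k(H)$.
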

A similar idea, restricted to $k=1$, appears in \cite{SashaStability}
and \cite{CDMMS}, where $H$ is taken to be a triangle-free Ramsey
graph with small independence number, and graphs of the form $I_1
\join H$ are used as sharpness examples for upper bounds on $\tau(G)$.

The rest of the paper is structured as follows. In
Section~\ref{sec:proof} we prove Theorem~\ref{thm:main}, which implies
Theorem~\ref{thm:simple-main}. In Section~\ref{sec:forest} we use
Theorem~\ref{thm:simple-main} to prove a stronger version of Theorem~\ref{thm:ore}. In Section~\ref{sec:tuza} we use
Theorem~\ref{thm:simple-main} to prove Theorem~\ref{thm:trifree-tuza};
we also state a conjecture arising naturally from the tools used in
this proof. Finally, in Section~\ref{sec:VAL} we show that
Theorem~\ref{thm:main} implies the simple-graph case of Vizing's
Adjacency Lemma.
\section{Proof of Theorem~\ref{thm:main}}
\label{sec:proof}
Let $y \in V(G)$ with $d_M(y) < k$ be given.  We will show that
$\dF{y} \leq d_M(y) - \sum_{z \in \Uk{y}}(k - d_M(z) - \mu_G(y,z))$. Fix a proper $k$-edge-coloring $\psi$ of $M$.
We use a family of auxiliary multidigraphs defined by
Kostochka~\cite{sasha-vizing}.
\begin{definition}
  For distinct $w,z \in V(G)$, let $\psi(w,z)$ be the set of colors
  used by $\psi$ on edges joining $w$ and $z$. (If there are no edges
  joining $w$ and $z$, then $\psi(w,z) = \emptyset$.)  For each $w \in
  V(G)$, let $\psi(w)$ be the set of all colors used on edges incident
  to $w$, and let $O(w) = [k] \setminus \psi(w)$.
\end{definition}
Observe that $O(y)$ is nonempty, since $d_M(y) < k$.
\begin{definition}
  For each $u \in \Uk{y}$, let $H_u$ be the multidigraph with vertex set
  $N_M(y) \cup \{u\}$, where the number of arcs $\mu_{H_u}(w,z)$ from
  $w$ to $z$ is given by
  \[ \mu_{H_u}(w,z) = \sizeof{O(w) \cap \psi(y,z)}. \]
\end{definition}
Kostochka proved the following useful properties of the digraphs $H_u$,
under the hypothesis that $M+yu$ has no $k$-edge-coloring. By the maximality
of $M$, this hypothesis holds in our context as well. Recall that $v$ is \emph{reachable}
from $u$ in the digraph $H_u$ if $H_u$ contains a directed path from $u$ to $v$.
\begin{lemma}[Kostochka~\cite{sasha-vizing}]\label{lem:sasha-oy}
  If $v$ is reachable from $u$ in $H_u$, then $O(v) \cap O(y) = \emptyset$.
\end{lemma}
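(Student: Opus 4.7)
The plan is to prove the lemma by induction on the length $t$ of a shortest directed path from $u$ to $v$ in $H_u$, using the maximality of $M$ together with a Vizing-style recoloring argument. Because $H_u$ and the color sets $O(\cdot)$ depend on the specific coloring $\psi$, I would first restate the claim with its implicit quantification made explicit: for every proper $k$-edge-coloring $\psi$ of $M$ and every $v$ reachable from $u$ in the associated digraph $H_u^\psi$ via a directed path of length at most $t$, we have $O^\psi(v) \cap O^\psi(y) = \emptyset$.

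The base case $t = 0$ handles $v = u$: if some color $c$ lay in $O(u) \cap O(y)$, then since $u \in \Uk{y}$ guarantees an edge $yu \in E(G) \setminus E(M)$, assigning color $c$ to this edge would extend $\psi$ to a proper $k$-edge-coloring of $M + yu$, contradicting the maximality of $M$.

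For the inductive step, take a shortest path $u = v_0, v_1, \ldots, v_t = v$ in $H_u^\psi$ and suppose for contradiction that some color $c$ lies in $O(v_t) \cap O(y)$. The terminal arc $v_{t-1} \to v_t$ arises from a color $c' \in O(v_{t-1}) \cap \psi(y, v_t)$; let $e$ be the corresponding edge from $y$ to $v_t$ colored $c'$. Recoloring $e$ from $c'$ to $c$ produces a function $\psi'$, which is a proper $k$-edge-coloring of $M$ because $c$ is missing at both $y$ and $v_t$ under $\psi$. Under $\psi'$, the color $c'$ is missing at both $y$ and $v_{t-1}$.

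The crux of the argument, and the step I expect to be the main obstacle, is verifying that the truncated path $u = v_0, v_1, \ldots, v_{t-1}$ is still a directed path in the auxiliary digraph $H_u^{\psi'}$ built from the new coloring. Since $\psi'$ differs from $\psi$ only on the single edge $e = yv_t$, the sets $O^{\psi'}(w)$ agree with $O^\psi(w)$ for every $w \notin \{y, v_t\}$, and $\psi'(y, z)$ agrees with $\psi(y, z)$ for every $z \neq v_t$. Because the truncated path is simple and avoids both $y$ and $v_t$, each of its arcs is preserved, and the vertex set $N_M(y) \cup \{u\}$ is unchanged since $M$ itself is not modified. The induction hypothesis applied to $\psi'$ then forces $O^{\psi'}(v_{t-1}) \cap O^{\psi'}(y) = \emptyset$, contradicting $c' \in O^{\psi'}(v_{t-1}) \cap O^{\psi'}(y)$.
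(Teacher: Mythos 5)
The paper does not prove this lemma at all: it is quoted from Kostochka~\cite{sasha-vizing} and used as a black box, so there is no in-paper argument to compare against. Your proposal supplies a self-contained proof, and after checking it I believe it is correct. The two points that needed care are both handled properly: (i) you strengthen the statement to quantify over all proper $k$-edge-colorings of the fixed subgraph $M$, which is exactly what makes the induction on path length legitimate, since the recoloring step changes $\psi$ but not $M$ (and hence not $N_M(y)$, $\Uk{y}$, or the maximality hypothesis); and (ii) you verify that the swap of the single edge $e=yv_t$ from $c'$ to $c$ leaves every arc of the truncated path intact, which works because a shortest directed path is simple, $y$ is not a vertex of $H_u$ (as $G$ is loopless), and $v_t$ occurs only as the terminal vertex, so neither the sets $O(v_i)$ nor the sets $\psi(y,v_{i+1})$ relevant to the surviving arcs are disturbed. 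The base case correctly invokes the maximality of $M$ via the uncolored edge guaranteed by $u\in \Uk{y}$. This is essentially the classical Vizing-style fan/recoloring argument recast in Kostochka's digraph language, which is presumably close to the original source's proof; in the context of this paper it is a valid (and welcome) filling-in of an omitted external proof rather than an alternative to anything written here.
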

\begin{definition}
  When $\alpha$ and $\beta$ are colors, an
  \emph{$[\alpha,\beta]$}-path is a path in $M$ whose edges (under the
  coloring $\psi$) are alternately colored $\alpha$ and $\beta$.  For
  $v,w \in V(M)$, an \emph{$[\alpha,\beta](v,w)$-path} is an
  $[\alpha, \beta]$-path whose endpoints are $v$ and $w$.
\end{definition}
\begin{lemma}[Kostochka~\cite{sasha-vizing}]\label{lem:sasha-path}
  If $v$ is reachable from $u$ in $H_u$, then for each $\alpha \in
  O(y)$ and each $\beta \in O(v)$, there is an
  $[\alpha,\beta](y,v)$-path.
\end{lemma}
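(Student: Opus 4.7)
The plan is to prove Lemma~\ref{lem:sasha-path} by induction on the length $\ell$ of a shortest directed path from $u$ to $v$ in $H_u$. Throughout, we exploit the maximality of $M$, which ensures that $M + yu$ admits no $k$-edge-coloring.

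For the base case $\ell = 0$, where $v = u$, suppose for contradiction $\alpha \in O(y)$ and $\beta \in O(u)$ but no $[\alpha,\beta](y,u)$-path exists. Then $y$ and $u$ lie in distinct $[\alpha,\beta]$-components of $M$, and the component containing $y$ is a path with $y$ as an endpoint. Swapping $\alpha$ and $\beta$ on this component yields a proper $k$-edge-coloring $\psi'$ in which $\beta \in O_{\psi'}(y) \cap O_{\psi'}(u)$, so coloring $yu$ with $\beta$ extends $\psi'$ to a proper $k$-edge-coloring of $M + yu$, contradicting maximality.

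For the inductive step $\ell \geq 1$, fix a shortest directed path $u = v_0, v_1, \ldots, v_\ell = v$ in $H_u$, let $w = v_{\ell-1}$, and pick a witnessing color $\gamma \in O(w) \cap \psi(y,v)$; then some edge $e_0$ joining $y$ and $v$ has color $\gamma$. Assuming for contradiction that no $[\alpha,\beta](y,v)$-path exists, I would first apply Lemma~\ref{lem:sasha-oy} (available since it is proved earlier) to deduce that $\alpha,\beta,\gamma$ are pairwise distinct and satisfy $\beta \in \psi(y)$, $\alpha \in \psi(v)$, and $\gamma \in \psi(y) \cap \psi(v)$. The induction hypothesis, applied at $w$ with colors $\alpha \in O(y)$ and $\gamma \in O(w)$, produces an $[\alpha,\gamma](y,w)$-path $P$; since the only $\gamma$-edge at $y$ is $e_0$, the path $P$ must begin $y,v,\ldots,w$, so $v$ lies on $P$.

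The critical construction is then a two-stage recoloring. First, swap $[\alpha,\beta]$ on $y$'s component, which avoids both $v$ (by the no-path assumption) and the edge $e_0$ (since $\gamma \notin \{\alpha,\beta\}$), producing a coloring $\psi_1$ with $\beta \in O_{\psi_1}(y) \cap O_{\psi_1}(v)$; next recolor $e_0$ from $\gamma$ to $\beta$ to obtain $\psi_2$, so that $\gamma \in O_{\psi_2}(y) \cap O_{\psi_2}(v)$. The main obstacle is then to iterate this construction backwards along the remaining fan $v_{\ell-1}, v_{\ell-2}, \ldots, v_0 = u$, using strong induction to supply the next Kempe chain at each stage and propagate the freed missing color from $y$ toward $u$. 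The delicate point is that each successive Kempe swap may disrupt later arcs of the fan in the recomputed auxiliary digraph; handling this requires a careful case analysis (or a strengthened inductive statement that tracks multiple fan vertices simultaneously) showing that, whenever the swap interferes with the fan, one already obtains a shorter witness in $H_u$ contradicting the minimality of $\ell$. Once the propagation terminates, $y$ and $u$ share a missing color, and assigning it to the new edge $yu$ produces a proper $k$-edge-coloring of $M + yu$, the desired contradiction.
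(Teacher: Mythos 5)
First, note that the paper does not actually prove Lemma~\ref{lem:sasha-path}; it is imported verbatim from Kostochka~\cite{sasha-vizing}, so there is no in-paper argument to compare against. Judged on its own terms, your proposal gets the framework right --- the base case is correct and complete, and the setup of the inductive step (the witness color $\gamma \in O(w)\cap\psi(y,v)$, the use of Lemma~\ref{lem:sasha-oy} to separate $\alpha,\beta,\gamma$, the $[\alpha,\beta]$-swap on $y$'s Kempe component followed by recoloring $e_0$ from $\gamma$ to $\beta$) is the standard Vizing fan-recoloring strategy and is sound as far as it goes. But the proof has a genuine gap exactly where you flag ``a careful case analysis'': after the two recolorings you have a new coloring $\psi_2$ with $\gamma \in O_{\psi_2}(y)\cap O_{\psi_2}(w)$, and to derive a contradiction (whether by invoking Lemma~\ref{lem:sasha-oy} for $\psi_2$, or by propagating back to $u$ as you propose) you must show that $w=v_{\ell-1}$ is still reachable from $u$ in the auxiliary digraph \emph{recomputed with respect to} $\psi_2$, or that failure of this yields some other contradiction. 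You assert this can be done but do not do it, and this is precisely where all the difficulty of the lemma lives.

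To see that the difficulty is not cosmetic: an arc $v_i \to v_{i+1}$ of the original path may be witnessed only by the color $\beta$, i.e.\ $O(v_i)\cap\psi(y,v_{i+1})=\{\beta\}$. In that case the unique $\beta$-edge at $y$ is the edge $yv_{i+1}$, which is the first edge of the swapped component $Q$; after the swap it is colored $\alpha$, and $\alpha\notin O_{\psi_1}(v_i)$ in general (indeed $\alpha\notin O(v_i)$ by Lemma~\ref{lem:sasha-oy}, and $O_{\psi_1}(v_i)=O(v_i)$ unless $v_i\in V(Q)$), so that arc genuinely disappears. Note also that you cannot rule out $\beta\in O(v_i)$ for $i<\ell$ by citing disjointness of the sets $O(v_i)$, since that disjointness (Lemma~\ref{lem:disjoint}) is itself proved \emph{using} Lemma~\ref{lem:sasha-path}. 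Similarly, vertices $v_j$ lying on $Q$ have their missing-color sets altered by the swap. A complete proof must show that in every such configuration either reachability survives, or a shorter directed $u$--$v$ path existed (contradicting minimality of $\ell$), or one can directly complete a $k$-edge-coloring of $M+yu$; until that case analysis is written out, what you have is an outline of the classical fan argument rather than a proof.
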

Kostochka~\cite{sasha-vizing} focused on studying graphs for which the
maximal $k$-edge-colorable subgraph $M$ consists of all edges of $G$
except a single edge $yu$, and therefore focused on a single digraph
$H_u$. In contrast, we work with graphs for which $M$ may be much
smaller, and therefore wish to work with many of these digraphs
simultaneously, which is facilitated by the following definitions.
\begin{definition}
  Say that $z \in N_M(y)$ is \emph{remote} if for all $u \in \Uk{y}$,
  the vertex $z$ is not reachable from $u$ in $H_u$. For each $w \in
  \Uk{y} \cup N_M(y) \cup \{y\}$, define $C(w)$ as follows: if $w$ is remote, then
  $C(w) = \psi(y,w)$, and otherwise $C(w) = O(w)$.  (In particular,
  $C(y) = O(y)$.)
\end{definition}
\begin{observation}\label{obs:remote}
  Since, by definition, $\Fk{y} \subset N_G(y)$, we have
  $\Fk{y} \subset \Uk{y} \cup N_M(y)$, so that $C(z)$ is defined for
  every $z \in \Fk{y}$.  If $z$ is remote and $z \in \Fk{y}$, then in
  particular, $z \notin \Uk{y}$, so $\mu_M(y,z)=\mu_G(y,z)$.
\end{observation}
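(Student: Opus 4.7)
The plan is to verify each assertion directly from the definitions; this observation is essentially a bookkeeping lemma that records two routine consequences of how $\Fk{y}$, $\Uk{y}$, and ``remote'' are set up, for convenient reference later in the proof.

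For the containment $\Fk{y} \subset \Uk{y} \cup N_M(y)$, I would take any $w \in \Fk{y}$ and split on the dichotomy $\mu_M(y,w) < \mu_G(y,w)$ versus $\mu_M(y,w) = \mu_G(y,w)$. In the first case, $w \in \Uk{y}$ by definition. In the second case, the hypothesis $\Fk{y} \subset N_G(y)$ gives $\mu_G(y,w) \geq 1$, so the equality forces $\mu_M(y,w) \geq 1$ as well, placing $w$ in $N_M(y)$. Since $C$ is defined on $\Uk{y} \cup N_M(y) \cup \{y\}$, this immediately yields that $C(z)$ exists for every $z \in \Fk{y}$.

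For the second claim, I would argue the contrapositive: any $z \in \Uk{y}$ fails to be remote. Indeed, taking $u = z$ in the definition of $H_u$, the vertex $z$ lies in $V(H_z) = N_M(y) \cup \{z\}$ by construction, so $z$ is reachable from $z$ in $H_z$ via the trivial directed walk of length zero. Hence no vertex of $\Uk{y}$ is remote. Contrapositively, a remote $z \in \Fk{y}$ lies in $\Fk{y} \setminus \Uk{y}$, and negating the defining inequality of $\Uk{y}$ gives $\mu_M(y,z) \geq \mu_G(y,z)$, which combined with the trivial $\mu_M(y,z) \leq \mu_G(y,z)$ yields $\mu_M(y,z) = \mu_G(y,z)$.

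There is no substantive obstacle here. The only point deserving mild care is the convention that every vertex is reachable from itself via a walk of length zero, which is the standard convention in digraph theory and is consistent with the way Lemmas~\ref{lem:sasha-oy} and~\ref{lem:sasha-path} are to be applied in the sequel (in particular, those lemmas must hold for $v = u$, and Lemma~\ref{lem:sasha-oy} then correctly recovers the classical fact that $O(u) \cap O(y) = \emptyset$ when $M + yu$ is not $k$-edge-colorable).
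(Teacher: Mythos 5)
Your proposal is correct and matches the paper's intended justification: the paper states this as an observation without a separate proof, and your unwinding of the definitions (the dichotomy $\mu_M(y,w)<\mu_G(y,w)$ versus equality for the containment, and the fact that any $u\in U(y)$ is trivially reachable from itself in $H_u$, hence not remote) is exactly the reasoning the paper relies on. Your remark about the length-zero reachability convention is apt, since both this observation and the intended application of Lemma~\ref{lem:sasha-oy} with $v=u$ depend on it.
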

Our next lemma strengthens Claim~3 of Kostochka~\cite{sasha-vizing}. It
can also be viewed as generalizing Lemma~1 of Andersen~\cite{andersen}
to the context of more than one uncolored edge.
\begin{lemma}\label{lem:disjoint}
  For all distinct $w,z \in N_M(y) \cup \{y\}$, we have $C(w) \cap C(z) = \emptyset$. 
\end{lemma}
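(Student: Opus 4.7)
I would handle the claim by a case analysis based on whether $y \in \{w, z\}$ and, when $w,z \in N_M(y)$, on which of them are remote. Three of the four possibilities are essentially immediate. If $z = y$, then $C(z) = O(y)$; when $w$ is remote, $C(w) = \psi(y,w) \subset \psi(y)$ is disjoint from $O(y)$ by definition, and when $w$ is non-remote, $w$ is reachable from some $u \in \Uk{y}$ in $H_u$, so Lemma~\ref{lem:sasha-oy} directly gives $O(w) \cap O(y) = \emptyset$. If both $w,z \in N_M(y)$ are remote, then $C(w) = \psi(y,w)$ and $C(z) = \psi(y,z)$ are disjoint simply because $\psi$ is a proper coloring, so edges from $y$ to distinct neighbors carry distinct colors.

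For the mixed case, where (say) $w$ is remote and $z \in N_M(y)$ is non-remote, I would argue by contradiction through the arc structure of the digraphs $H_u$. Suppose $\gamma \in \psi(y,w) \cap O(z)$. Then $\gamma \in O(z) \cap \psi(y,w)$ is exactly what the definition of $H_u$ asks for an arc from $z$ to $w$, and this arc exists in $H_u$ for every choice of $u$. Choose $u \in \Uk{y}$ so that $z$ is reachable from $u$ in $H_u$; appending the arc $z \to w$ to a directed $u$-to-$z$ path shows that $w$ is reachable from $u$ in $H_u$, contradicting the remoteness of $w$.

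The main obstacle is the case where both $w,z \in N_M(y)$ are non-remote, since any common color in $O(w) \cap O(z)$ is missing at both vertices and so cannot witness an arc in any $H_u$; the arc-based shortcut is unavailable. I plan to handle this case with a Kempe-chain argument via Lemma~\ref{lem:sasha-path}. Fix a candidate $\gamma \in O(w) \cap O(z)$ and pick any $\alpha \in O(y)$, which exists because $d_M(y) < k$. Applying Lemma~\ref{lem:sasha-path} separately at $w$ and at $z$ produces an $[\alpha,\gamma](y,w)$-path and an $[\alpha,\gamma](y,z)$-path, so both $w$ and $z$ lie on the (unique) maximal $[\alpha,\gamma]$-path $P$ through $y$. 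Because $\gamma$ is missing at both $w$ and $z$, neither can be interior to $P$ (an interior vertex of an alternating path carries edges of both colors); hence both must equal the unique endpoint of $P$ opposite $y$. This forces $w = z$, contradicting the assumption that $w$ and $z$ are distinct and completing the proof.
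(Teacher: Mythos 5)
Your proposal is correct and follows essentially the same case analysis as the paper's proof: the cases involving $y$ or two remote vertices are handled identically, the mixed remote/non-remote case uses the same arc-existence argument in $H_u$ (you phrase it contrapositively, the paper directly), and the two-non-remote case uses the same Kempe-chain argument via Lemma~\ref{lem:sasha-path}. Your added justification that a vertex missing color $\gamma$ cannot be interior to the maximal $[\alpha,\gamma]$-path is a slightly more explicit version of the step the paper leaves implicit.
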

\begin{proof}
  If $w=y$ and $z$ is remote, then
  $C(y) \cap C(z) = O(y) \cap \psi(y,z) = \emptyset$.  If $w=y$ and
  $z$ is not remote, then Lemma~\ref{lem:sasha-oy} implies that
  $C(y) \cap C(z) = O(y) \cap O(z) = \emptyset$. Hence we may assume that
  $y \notin \{w,z\}$.

  If $w,z$ are both remote, then since $\psi$ is a proper coloring, we
  see that $C(w) \cap C(z) = \psi(y,w) \cap \psi(y,z) = \emptyset$.

  If $z$ is remote and $w$ is not remote, then there is some $u \in \Uk{y}$
  such that $w$ is reachable from $u$ in $H_u$ while $z$ is not
  reachable from $u$, so that $H_u$ has no arc $wz$. By the
  definition of $H_u$, this implies that
  $C(w) \cap C(z) = O(w) \cap \psi(y,z) = \emptyset$.

  Thus, we may assume that neither $w$ nor $z$ is remote. Let $\alpha
  \in O(y)$ and suppose that there is some $\beta \in O(w) \cap O(z)$.
  Let $P$ be the unique maximal $[\alpha,\beta]$-path starting at $y$.
  Lemma~\ref{lem:sasha-path} implies that both $w$ and $z$ are the other
  endpoint of $P$, which is impossible. Hence $C(w) \cap C(z) = O(w) \cap O(z) = \emptyset$.
\end{proof}
Now we complete the proof of Theorem~\ref{thm:main}.  First we argue
that $\sizeof{C(z)} \geq \mu_G(z, y)$ for all $z \in \Fk{y}$. If $z$
is remote, then by Observation~\ref{obs:remote}, all edges from $z$ to $y$ are colored, hence
$\sizeof{C(z)} = \mu_G(z,y)$.  If $z$ is not remote, then since $z \in
\Fk{y}$, we have
\[ \sizeof{C(z)} = \sizeof{O(z)} = k - d_M(z) \geq \mu_G(z,y). \]
Lemma~\ref{lem:disjoint} implies that $\sum_{z \in
  \Fk{y}}\sizeof{C(z)} \leq k - \sizeof{C(y)}$, so we have
\begin{align*}
  \dF{y} &= \sum_{z \in \Fk{y}}\mu_G(z,y) \\
  &\leq \sum_{z \in \Fk{y}}\sizeof{C(z)} - \sum_{z \in \Uk{y}}(\sizeof{C(z)} - \mu_G(z,y)) \\
  &\leq k - \sizeof{C(y)} - \sum_{z \in \Uk{y}}(\sizeof{C(z)} - \mu_G(z,y)) \\
  &= k - \sizeof{O(y)} - \sum_{z \in \Uk{y}}(\sizeof{C(z)} - \mu_G(z,y)) \\
  &= d_M(y) - \sum_{z \in \Uk{y}}(\sizeof{C(z)} - \mu_G(z,y)).
\end{align*}
Now for $z \in \Uk{y}$ we have
\[ \sizeof{C(z)} = \sizeof{O(z)} = k - d_M(z), \]
so we conclude that
\[ \dF{y} \leq d_M(y) - \sum_{z \in \Uk{y}}[k - d_M(z) -
\mu_G(z,y)].\]
\section{Forests of Maximum Degree}\label{sec:forest}
Following the notation of Anstee and Griggs~\cite{anstee-griggs}, 
given a multigraph $G$, we define $\GDelta$ to be the subgraph of $G$
induced by the vertices of maximum degree. We also define $\GDmu$
to be the subgraph of $G$ induced by the vertices which have
\emph{both} maximum degree and maximum multiplicity. (Possibly
no such vertices exist, as occurs when $\Dmu(G) < \Delta(G) + \mu(G)$;
in this case, we consider $\GDmu$ to be a graph with no vertices
and no edges.)

The following theorems give conditions on $\GDelta$ or $\GDmu$ which imply the
stronger claim that $G$ can be properly edge-colored with fewer colors than Theorem~\ref{thm:real-vizing}
would require.
\begin{theorem}[Berge--Fournier~\cite{berge-fournier}]\label{thm:BF}
  If $k \geq \Delta(G) + \mu(G) - 1$ and $\GDmu$ has no edges, then $G$ is $k$-edge-colorable.
\end{theorem}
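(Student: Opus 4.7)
The strategy is proof by contradiction: suppose $M$ is a maximal $k$-edge-colorable subgraph of $G$ with $M \neq G$, and derive an edge of $\GDmu$. I may assume $k = \Delta(G) + \mu(G) - 1$ (since Theorem~\ref{thm:real-vizing} already covers $k \geq \Delta(G) + \mu(G)$) and that $V(\GDmu) \neq \emptyset$ (otherwise $\Dmu(G) \leq k$ and Theorem~\ref{thm:ore} applies directly).

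Fix a missing edge $vu \in E(G) \setminus E(M)$. The crude bound $d_M(v) \leq d_G(v) - 1 \leq \Delta - 1 \leq k - \mu(v)$ shows both that $v \in F$ and that $d_M(v) < k$, so Theorem~\ref{thm:simple-main} is applicable at $v$, and symmetrically at $u$. The pivotal step is: if $v \in V(\GDmu)$ then $d_G(v) = \Delta$, and Theorem~\ref{thm:simple-main} gives $d_F(v) \leq d_M(v) \leq \Delta - 1 < d_G(v)$, so some neighbor $w$ of $v$ lies outside $F$. For such $w$, the condition $d_M(w) > k - \mu(w)$ combined with the chain $d_M(w) \leq d_G(w) \leq \Delta = k - \mu(G) + 1 \leq k - \mu(w) + 1$ forces equality throughout, so $d_G(w) = \Delta$ and $\mu(w) = \mu(G)$, meaning $w \in V(\GDmu)$; but then $vw \in E(\GDmu)$, contradicting the hypothesis. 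The same argument applied to $u$ shows $u \notin V(\GDmu)$ as well. Hence every missing edge lies inside $R := V(G) \setminus V(\GDmu)$, and every $w \in V(\GDmu)$ is saturated in $M$.

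The main obstacle is closing this residual case. My plan is to apply the sharper Theorem~\ref{thm:main} at a missing-edge vertex $v \in R$: since every $z \in \Uk{v}$ must lie in $R$ (as $V(\GDmu)$-vertices are saturated), and since each such $z$ contributes at least $m(z) \geq 1$ to the sum via $k - d_M(z) - \mu_G(v,z) \geq \mu(z) + m(z) - \mu_G(v,z) \geq m(z)$, the theorem yields the sharpened bound $\mu(G) \cdot |T(v)| \geq 2 m(v)$, where $T(v) := \{w \in N_G(v) \cap V(\GDmu) : \mu_G(v, w) = \mu(G)\}$ and $m(v)$ is the number of missing edges at $v$. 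In particular $v$ has a $V(\GDmu)$-neighbor $w$ joined to $v$ by a maximum-multiplicity edge. The strategy is then a swap: replace a single edge of the $vw$-bundle in $M$ by the missing edge $vu$, producing a new maximal $k$-edge-colorable subgraph $M'$ with $|M'| = |M|$ but whose missing edge is now incident to $w \in V(\GDmu)$, reducing to the pivotal step above. The delicate technical work — and the main obstacle in the whole argument — is verifying the swap is always achievable, either because the chosen $vw$-color happens to be free at $u$, or after a suitable Kempe-chain correction; this is where the fine-grained accounting of Theorem~\ref{thm:main} beyond the cruder Theorem~\ref{thm:simple-main} does the essential work.
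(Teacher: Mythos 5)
Your first half is sound: the computation showing that a vertex of $\GDmu$ incident to an uncolored edge would force a neighbor outside $F$, and that any vertex outside $F$ must itself lie in $V(\GDmu)$, is correct and is essentially the paper's argument (the paper phrases it as: every neighbor of such a $v$ lies in $F$, so $d_F(v)=d_G(v)>d_M(v)\geq d_F(v)$). The problem is that you reach this configuration only when the missing edge touches $V(\GDmu)$, and your handling of the residual case --- a missing edge $vu$ with both ends in $R=V(G)\setminus V(\GDmu)$ --- is not a proof. The counting step $\mu(G)\cdot\sizeof{T(v)}\geq 2m(v)$ via Theorem~\ref{thm:main} does check out, but the ``swap'' that is supposed to finish the argument is exactly the hard part and is left unverified: you must show that $M-vw+vu$ is $k$-edge-colorable, i.e.\ that after uncoloring one edge of the $vw$-bundle you can properly color $vu$. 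By maximality of $M$, the edge $vu$ cannot be colored without such a sacrifice, so there is no reason a single deletion plus an unspecified ``Kempe-chain correction'' succeeds; this is precisely the kind of claim that requires a full Vizing-fan argument, and you have only named it, not proved it. (There is also a secondary issue: even if the swap produced a $k$-edge-colorable $M'$ with $\sizeof{M'}=\sizeof{M}$, you would need $M'$ to be \emph{maximal} before reapplying Theorem~\ref{thm:simple-main}; this is repaired by taking $M$ of maximum size rather than merely maximal, but you do not say so.)

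The paper avoids the residual case entirely by choosing $M$ more carefully rather than arbitrarily: since every vertex outside $V(\GDmu)$ satisfies $d(v)+\mu(v)\leq k$, Theorem~\ref{thm:ore} colors $G-V(\GDmu)$ completely, and one then takes $M$ maximal among $k$-edge-colorable subgraphs \emph{containing} $E(G-V(\GDmu))$. With this choice every uncolored edge is incident to $V(\GDmu)$, so only your ``pivotal step'' is ever needed and no swap or Kempe-chain analysis arises. (Formally the paper derives Theorem~\ref{thm:BF} from Theorem~\ref{thm:newforest}, whose base case is exactly this argument.) If you want to salvage your outline, replacing the arbitrary maximal $M$ by this constrained one is the missing idea.
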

\begin{theorem}[Lovasz--Plummer~\cite{lovasz-plummer} and Berge--Fournier~\cite{berge-fournier}]\label{thm:LP}
  If $\mu(G)=1$, $k \geq \Delta(G)$, and $G_{\Delta}$ is a forest, then $G$ is $k$-edge-colorable.
\end{theorem}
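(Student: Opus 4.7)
The plan is to apply Theorem~\ref{thm:simple-main} to a maximal $k$-edge-colorable subgraph $M$ of $G$ and derive a contradiction from the assumption $M \neq G$, leveraging the forest structure of $G_\Delta$. First I reduce to $k = \Delta(G)$: if $k > \Delta(G)$, then $d_M(v) \leq d_G(v) < k$ for every vertex $v$, so $F = V(G)$ in the notation of Theorem~\ref{thm:simple-main}, and the theorem gives $d_G(v) = d_F(v) \leq d_M(v)$, forcing $M = G$.

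Now assume $k = \Delta(G)$, let $M$ be a maximal $k$-edge-colorable subgraph, and suppose for contradiction that $M \neq G$. Set $F = \{v : d_M(v) < k\}$ and $L = V(G) \setminus F$. Each $v \in L$ has $d_M(v) = k$, which forces $d_G(v) = k$, so $L \subseteq V(G_\Delta)$; moreover, all $G$-edges incident to $L$ lie in $M$. Hence every non-$M$ edge of $G$ lies in $G[F]$, and $G[L] \subseteq G_\Delta$ is a forest. Decomposing $d_M(v) = e_L(v) + m_F(v)$ and $d_F(v) = m_F(v) + b_F(v)$, where $e_L(v)$ is the number of $L$-neighbors of $v$ and $m_F(v), b_F(v)$ are the numbers of $M$-edges and non-$M$ edges from $v$ into $F$, the bound $d_F(v) \leq d_M(v)$ of Theorem~\ref{thm:simple-main} becomes $b_F(v) \leq e_L(v)$: every vertex of $F$ has at least as many $L$-neighbors as non-$M$ edges.

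The main obstacle is to convert these per-vertex bounds into a contradiction with the forest structure of all of $G_\Delta$, not merely of $G[L]$. Summing alone gives only $2|E(G)\setminus E(M)| \leq |E_G(F,L)| = k|L| - 2|E(G[L])|$, which combined with the forest bound $|E(G[L])| \leq |L| - 1$ is too weak. To close the gap I plan to set $A = F \cap V(G_\Delta)$, observe that each $v \in A$ has both a non-$M$ edge and an $L$-neighbor (so $|E_G(L,A)| \geq |A|$), and combine the full forest bound $|E(G_\Delta)| \leq |L| + |A| - 1$ with the sharper inequality $\sum_{w \in U(v)} \sigma(w) \leq e_L(v)$ obtained by rearranging Theorem~\ref{thm:main}, where $\sigma(w) = k - d_M(w) \geq 1$ for $w \in F$ and $\sigma(w) \geq 2$ whenever $w$ has a non-$M$ edge and $d_G(w) < k$. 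The extra slack on non-maximum-degree vertices should force every non-$M$ edge to lie inside $G_\Delta$, yielding an edge-count contradiction on the forest.
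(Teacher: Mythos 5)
Your reduction to $k=\Delta(G)$ is correct, and the translation of Theorem~\ref{thm:simple-main} into the per-vertex inequality $b_F(v)\leq e_L(v)$ (and of Theorem~\ref{thm:main} into $\sum_{w\in U(v)}(k-d_M(w))\leq e_L(v)$) is also correct. But the endgame is only a plan (``should force \dots''), and I believe it cannot be carried out as stated: the system of inequalities you are working with, together with the forest bound on $\GDelta$, is \emph{consistent} with the existence of uncolored edges, so no global count can reach a contradiction. Concretely, take $k=2$ and let $L=\{u_1,u_2\}$ be nonadjacent, $A=\{v,w\}$ with $vw$ the unique uncolored edge, $v\sim u_1$, $w\sim u_2$. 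Then $\GDelta$ is the path $u_1vwu_2$ (a forest), every instance of your inequalities holds with equality or slack ($\sigma(w)=1\leq e_L(v)=1$, $\sizeof{E_G(L,A)}=2\geq\sizeof{A}$, and the forest bound $3\leq \sizeof{L}+\sizeof{A}-1=3$ is tight), yet an uncolored edge is present. This configuration is of course not realizable by an actual \emph{maximal} $M$ (it sits inside the path $x u_1 v w u_2 y$, which is $2$-edge-colorable), but that is exactly the point: the conclusions of Theorems~\ref{thm:simple-main} and~\ref{thm:main} do not capture enough of the maximality of $M$ for an arbitrary maximal $M$ to yield a contradiction by counting. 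Your intermediate claim that the slack on non-maximum-degree vertices forces every uncolored edge into $\GDelta$ also does not follow: an uncolored edge $vw$ with $d_G(v)<\Delta$ only forces $e_L(w)\geq 2$, and edges from $L$ to $F\setminus A$ are invisible to the forest hypothesis.

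The missing ingredient is that you must \emph{choose} the maximal subgraph $M$ rather than take an arbitrary one, and use the forest structure locally rather than globally. The paper inducts on $\sizeof{E(G^*)}$ (here $G^*=\GDelta$): if $\GDelta$ has an edge, pick a leaf vertex $v$ of the forest $\GDelta$ with unique $\GDelta$-neighbor $w$, set $M=G-vw$, and show $M$ is $k$-edge-colorable by the induction hypothesis; if $G$ were not $k$-edge-colorable this $M$ would be maximal, and then \emph{every} neighbor of $v$ lies in $F$ (the non-$\GDelta$ neighbors because their degree is below $\Delta$, and $w$ because it meets the uncolored edge), so $d_F(v)=d_G(v)>d_M(v)\geq d_F(v)$, a contradiction at the single vertex $v$. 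A separate base case handles $\GDelta$ edgeless by choosing $M$ maximal among $k$-edge-colorable subgraphs containing all of $E(G-V(\GDelta))$. If you want to keep a non-inductive flavor, you should at least replace your arbitrary $M$ by one of these tailored choices; as written, the proposal has a genuine gap.
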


In this section, we use Theorem~\ref{thm:main} to prove the following
common generalization of Theorem~\ref{thm:BF} and
Theorem~\ref{thm:LP}. Let $\Gstar$ be the subgraph of $G$ induced
by all vertices $v$ such that $d(v) + \mu(v) = \Dmu(G)$. Note that $\Gstar$
only differs from $\GDmu$ when $\Dmu(G) < \Delta(G) + \mu(G)$. For such
graphs, Theorem~\ref{thm:ore} implies that $G$ is $(\Delta(G)+\mu(G)-1)$-edge-colorable without 
\emph{any} further restriction on the graph structure, while the following theorem implies
that $G$ can be edge-colored with fewer colors if it satisfies certain restrictions on $\Gstar$.
\begin{theorem}\label{thm:newforest}
  If $k \geq \Dmu(G)-1$ and $\Gstar$ has no cycle of length greater than $2$, then $G$ is $k$-edge-colorable.
\end{theorem}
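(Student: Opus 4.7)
The plan is to argue by contradiction. Suppose $G$ is not $k$-edge-colorable. By Theorem~\ref{thm:ore} I may reduce to the critical case $k = \Dmu(G) - 1$. Let $M$ be a maximal $k$-edge-colorable subgraph, so $E(G) \setminus E(M)$ is nonempty.

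A first short computation describes the set $F = \{v \st d_M(v) \le k - \mu(v)\}$ of Theorem~\ref{thm:simple-main} in this setting. For $v \notin F$ we have $d_M(v) + \mu(v) \ge k+1$, while the trivial bounds $d_M(v) \le d_G(v)$ and $d_G(v) + \mu(v) \le \Dmu(G) = k+1$ force equality throughout, so $d_M(v) = d_G(v)$ and $v \in V(\Gstar)$. Thus $V(G) \setminus F$ is exactly the set of \emph{saturated} vertices of $\Gstar$ (those with every $G$-edge already in $M$), and every uncolored edge lies within $G[F]$. Applying Theorem~\ref{thm:simple-main} to any $v \in F$ incident to an uncolored edge then gives $d_F(v) \le d_M(v) < d_G(v)$, so $v$ has a $G$-neighbor in $V(G) \setminus F \subseteq V(\Gstar)$.

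The main obstacle is converting these local observations into a global contradiction with the forest structure of $\Gstar$. My intended route is to sharpen the previous paragraph by invoking Theorem~\ref{thm:main} in place of Theorem~\ref{thm:simple-main}, which yields a quantitative lower bound on the multi-degree from $v$ into $V(\Gstar) \setminus F$ involving the terms $k - d_M(w) - \mu_G(v,w)$ for $w \in \Uk{v}$. I would then iterate along alternating uncolored/$M$-colored steps: from an uncolored edge $yu$, cross via an $M$-edge to a saturated neighbor in $\Gstar$, and use a Kostochka alternating $[\alpha,\beta](y,\cdot)$-path from Lemma~\ref{lem:sasha-path} inside an appropriate digraph $H_u$ to reach another vertex of $F \cap V(\Gstar)$, whose $\Gstar$-neighbors are again controlled. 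By carefully tracking which vertices of $\Gstar$ are visited, the walk should be forced to close into a cycle of length at least $3$ in $\Gstar$, contradicting the hypothesis. The hardest step, and where I expect the real work to lie, is the bookkeeping required to ensure that the closed walk actually produces a genuine cycle of length greater than $2$ in $\Gstar$, rather than collapsing to a $2$-cycle formed by a pair of parallel edges or trivially repeating a vertex.
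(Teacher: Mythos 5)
Your first two paragraphs are correct: with $k = \Dmu(G)-1$ and an arbitrary maximal $k$-edge-colorable subgraph $M$, every vertex outside $F$ is indeed a saturated vertex of $\Gstar$, every uncolored edge lies in $G[F]$, and Theorem~\ref{thm:simple-main} forces each endpoint of an uncolored edge to have a neighbor in $V(\Gstar)\setminus F$. But the argument stops there: the entire global step is a statement of intent, and the intended route has an obstruction you do not resolve. The endpoints of uncolored edges need not lie in $V(\Gstar)$ at all, so the ``walk'' you describe does not stay inside $\Gstar$; an edge from such a vertex to a saturated vertex of $\Gstar$ is not an edge of $\Gstar$, and the hypothesis that $\Gstar$ has no cycle of length greater than $2$ gives no leverage over it. Nor is it clear how to continue the walk from a saturated vertex of $\Gstar$, which by definition meets no uncolored edge and hence is not covered by your local observation. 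Re-opening the Kostochka machinery ($H_u$, Lemma~\ref{lem:sasha-path}) to patch this would amount to redoing the proof of Theorem~\ref{thm:main} rather than applying it, and you yourself flag that the decisive bookkeeping is missing.

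The paper's proof avoids all of this by not taking $M$ to be an arbitrary maximal colorable subgraph: it inducts on $\sizeof{E(\Gstar)}$. In the base case ($\Gstar$ edgeless) it chooses $M$ maximal among $k$-edge-colorable subgraphs containing all of $E(G-V(\Gstar))$, so every uncolored edge meets $V(\Gstar)$; a vertex $v\in V(\Gstar)$ then has all its neighbors in $V(G)-V(\Gstar)\subset F$, giving $d_F(v)=d_G(v)>d_M(v)\geq d_F(v)$ directly. In the inductive step it deletes one copy of an edge $vw$ at a leaf $v$ of (the graph obtained by merging parallel edges in) $\Gstar$, colors the result by induction or by Theorem~\ref{thm:ore}, and notes that if $G$ is not $k$-edge-colorable then this $M$ is a maximal $k$-edge-colorable subgraph with a single uncolored edge $vw$; now $v,w\in F$ and $w$ is the only $\Gstar$-neighbor of $v$, so again all neighbors of $v$ lie in $F$ and the same one-line contradiction applies. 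The forest hypothesis enters only through the existence of a leaf, not through any cycle-hunting, and only Theorem~\ref{thm:simple-main} is needed, never the coloring machinery. The missing idea in your proposal is precisely this freedom to choose which maximal $M$ to apply Theorem~\ref{thm:simple-main} to.
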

Equivalently, the hypothesis of Theorem~\ref{thm:newforest} is that
merging parallel edges in $\Gstar$ should yield a forest. As in the
proof of Theorem~\ref{thm:ore} from Theorem~\ref{thm:simple-main}, our
proof will not make explicit reference to any particular
edge-coloring, only to maximal $k$-edge-colorable subgraphs of $G$.
\begin{proof}
  Fixing a value of $k$, we use induction on $\sizeof{E(\Gstar)}$,
  with base case when $\Gstar$ has no edges or when
  $k \geq \Dmu(G)$. If $k \geq \Dmu(G)$ then
  Theorem~\ref{thm:ore} immediately implies that $G$ is $k$-edge-colorable.
  Thus, we may assume that $k = \Dmu(G)-1$.

  Suppose that $\Gstar$ has no edges. By Theorem~\ref{thm:ore}, $G -
  V(\Gstar)$ is $k$-edge-colorable. Among all $k$-edge-colorable
  subgraphs of $G$ containing $E(G-V(\Gstar))$, choose $M$ to be
  maximal. The only possible edges in $E(G)-E(M)$ are edges incident
  to vertices of $\Gstar$.

  Let $F = \{v \in V(G) \st d_M(v) \leq k-\mu_G(v)\}$, as in
  Theorem~\ref{thm:simple-main}.  For all $v \in V(G)-V(\Gstar)$, we have
  $d_G(v) + \mu_G(v) < \Dmu(G)$, hence
  \[ d_M(v) \leq d_G(v) \leq k-\mu_G(v), \]
  and so $V(G) - V(\Gstar) \subset F$.

  Now consider any $v \in V(\Gstar)$.  Since $\Gstar$ has no edges, we
  have $d_G(v) = d_F(v)$, so if $d_M(v) < d_G(v)$, then
  Theorem~\ref{thm:simple-main} yields the contradiction
  $d_F(v) > d_M(v) \geq d_F(v)$.  Thus, $E(G)-E(M)$ has no edge
  incident to any vertex of $\Gstar$. By the choice of $M$, this
  implies that $M=G$. This proves the claim when $\Gstar$ has
  no edges.

  Now suppose that $\Gstar$ contains some edges. 
  Let $v$ be a ``leaf
  vertex'' in $\GDmu$, that is, choose a vertex $v$ that has
  exactly one neighbor $w$ in $\GDmu$, possibly with
  $\mu(v,w) > 1$. Let $M$ be the graph obtained from $G$ by removing
  one copy of the edge $vw$.

  We claim that $M$ is $k$-edge-colorable.  If $\Dmu(M) < \Dmu(G)$,
  then Theorem~\ref{thm:ore} implies that $M$ is
  $k$-edge-colorable. On the other hand, if $\Dmu(M) = \Dmu(G)$, then
  $V(M^*) = V(\Gstar) - \{v, w\}$, so the induction hypothesis
  implies that $M$ is $k$-edge-colorable.

  Now if $G$ is not $k$-edge-colorable, then $M$ is a maximal $k$-edge-colorable
  subgraph of $G$. Let $F = \{v \in V(G) \st d_M(v) \leq k-\mu(v)\}$, as in
  Theorem~\ref{thm:simple-main}. As before, $V(G) - V(\Gstar) \subset F$.
  Furthermore, as $v$ and $w$ are both incident to the uncolored edge $vw$,
  we have $v,w \in F$. Thus all neighbors of $v$ lie in $F$, since $v$
  has no other neighbor in $\Gstar$. Theorem~\ref{thm:simple-main} now
  yields the contradiction $d_F(v) = d_G(v) > d_M(v) \geq d_F(v)$.
  It follows that $G$ is $k$-edge-colorable.
\end{proof}
\section{Tuza's Conjecture}\label{sec:tuza}
In this section, we consider only simple graphs.
\begin{definition}[Fink--Jacobson~\cite{FinkJacobson1,FinkJacobson2}]
  For positive integers $k$, a vertex set $D \subset V(G)$ is
  \emph{$k$-dependent} if the induced subgraph $G[D]$ has maximum
  degree at most $k-1$. A vertex set $D$ is \emph{$k$-dominating} if
  $\sizeof{N(v) \cap D} \geq k$ for all $v \in V(G) - D$.
\end{definition}
\begin{definition}
  For any set $D \subset V(G)$ and any $k \geq 1$, define
  $\phi_k(D) = k\sizeof{D} - \sizeof{E(G[D])}$, and define $\phi_k(G)
  = \max_{D \subset V(G)}\phi_k(D)$. A \emph{$k$-optimal set} is a
  $k$-dependent set achieving this maximum value of $\phi_k$.
\end{definition}
The notation $\phi_k(D)$ is borrowed from the survey paper
\cite{DomSurvey}, but the function $\phi_k$ appears to have first been
studied by Favaron~\cite{Favaron}, who proved that every $k$-optimal
set is $k$-dominating, thereby answering a question posed by Fink and
Jacobson~\cite{FinkJacobson1,FinkJacobson2}. While \cite{Favaron,DomSurvey}
considered sets which maximize $\phi_k$ only over $k$-dependent vertex sets,
rather than considering a maximum over all vertex sets as we do here, the
following lemma shows that this does not change the maximum value achieved.
\begin{lemma}\label{lem:subset}
  If $G$ is a graph and $T \subset V(G)$, then for any $k \geq 1$, there is a
  $k$-dependent subset $D \subset T$ such that $\phi_k(D) \geq \phi_k(T)$.
  In particular, every graph has a $k$-optimal set.
\end{lemma}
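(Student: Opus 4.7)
The plan is to proceed by a straightforward greedy deletion argument, based on the monovariant that removing a sufficiently high-degree vertex from a candidate set cannot decrease $\phi_k$. Specifically, I would first observe that if $T$ is not already $k$-dependent, then there exists some vertex $v \in T$ with $d_{G[T]}(v) \geq k$. Setting $T' = T - \{v\}$, a direct computation gives
\[ \phi_k(T') = k(\sizeof{T}-1) - (\sizeof{E(G[T])} - d_{G[T]}(v)) = \phi_k(T) - k + d_{G[T]}(v) \geq \phi_k(T). \]
So deleting any vertex of $G[T]$-degree at least $k$ weakly increases $\phi_k$ while strictly shrinking the set.

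Next I would iterate this deletion step. Since $T$ is finite, the process must terminate, and by construction the terminal set $D \subset T$ has $G[D]$ of maximum degree at most $k-1$, hence is $k$-dependent. The accumulated inequality along the chain of deletions yields $\phi_k(D) \geq \phi_k(T)$, which is the first claim.

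For the second claim, I would note that $V(G)$ is finite, so $\phi_k$ attains its maximum $\phi_k(G)$ on some $T \subset V(G)$. Applying the first claim to this $T$ produces a $k$-dependent set $D \subset T$ with $\phi_k(D) \geq \phi_k(T) = \phi_k(G)$. Since $\phi_k(G)$ is the global maximum, equality holds, so $D$ is a $k$-optimal set.

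There is no real obstacle here; the only thing to be careful about is the sign of the change in $\phi_k$ under deletion, which is where the $k$-dependence threshold of $k-1$ interacts exactly with the coefficient of $\sizeof{D}$ in the definition of $\phi_k$. This matching of constants is what makes the greedy reduction work, and it would be worth flagging in the write-up.
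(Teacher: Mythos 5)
Your proposal is correct and is essentially identical to the paper's own proof: the same greedy deletion of a vertex with $d_{G[T]}(v) \geq k$, the same computation showing $\phi_k(T - v) \geq \phi_k(T)$, and the same termination argument. The paper states this more tersely but the underlying argument is the same.
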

\begin{proof}
  If $T$ is not $k$-dependent, then there is some $v \in T$ with $d_T(v) \geq k$;
  now $\phi_k(T-v) \geq \phi_k(T)$. Repeatedly removing such vertices yields
  the desired $k$-dependent subset.
\end{proof}
\begin{definition}
  For a graph $G$ and $k \geq 1$, let $\alpha'_k(G)$ denote
  the largest number of edges in a $k$-edge-colorable subgraph of $G$.
\end{definition}
\begin{theorem}\label{thm:tuzaconnection}
  If $H$ is triangle-free, then
  \begin{align*}
    \nu(I_k \join H) &= \alpha'_k(H),\text{ and} \\
    \tau(I_k \join H) &= k\sizeof{V(H)} - \phi_k(H).
  \end{align*}
\end{theorem}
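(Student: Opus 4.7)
The plan is to prove the two equalities separately. For the packing identity, I would exploit the triangle-freeness of $H$: any triangle in $I_k \join H$ must consist of an edge $uw \in E(H)$ together with a single vertex $v \in I_k$. A family $\sey$ of edge-disjoint such triangles then corresponds bijectively to a partial proper $k$-edge-coloring of $H$ via $\psi(uw) = v$ for $(uw,v) \in \sey$; the edge-disjointness of $\sey$ is precisely equivalent to properness of $\psi$ on its domain. Taking the largest such family on one side and a maximum $k$-edge-colorable subgraph on the other yields $\nu(I_k \join H) = \apk(H)$.

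For the covering identity, the upper bound is constructive. Given any $D \subseteq V(H)$, I would form $X$ by removing all edges of $H[D]$ together with every spoke edge $uv$ with $u \notin D$ and $v \in I_k$; this destroys every triangle (which must have both $H$-endpoints in $D$, in which case the $H$-edge is deleted) and has size $|E(H[D])| + k(|V(H)| - |D|) = k|V(H)| - \phi_k(D)$. Maximizing over $D$ yields $\tau(I_k \join H) \leq k|V(H)| - \phi_k(H)$.

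The substantive direction is the reverse bound $\tau(I_k \join H) \geq k|V(H)| - \phi_k(H)$: given any cover $X$, I need to exhibit a set $D$ with $\phi_k(D) \geq k|V(H)| - |X|$. My plan is to consider the colors in $I_k$ one at a time. For each $v \in I_k$, set $S_v = \{u \in V(H) \st uv \notin X\}$, the set of $H$-vertices still adjacent to $v$ after removing $X$. Writing $X_H = X \cap E(H)$, the key pointwise bound is $|E(H[S_v])| \leq |X_H|$: if $u, w \in S_v$ and $uw \in E(H) \setminus X_H$, then $u$, $w$, and $v$ span a surviving triangle, contradicting the cover property.

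Summing over all $k$ colors, $\sum_{v \in I_k} |S_v|$ counts the spoke edges outside $X$, which equals $k|V(H)| - (|X| - |X_H|)$, so
\[ \sum_{v \in I_k} \phi_k(S_v) \geq k\bigl(k|V(H)| - |X| + |X_H|\bigr) - k|X_H| = k\bigl(k|V(H)| - |X|\bigr). \]
By averaging, some color $v^*$ achieves $\phi_k(S_{v^*}) \geq k|V(H)| - |X|$, completing the lower bound. The main obstacle I anticipate is establishing the pointwise bound $|E(H[S_v])| \leq |X_H|$; once that is in place, both the averaging argument and the constructive direction are straightforward.
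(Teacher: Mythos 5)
Your proposal is correct. The first identity and the inequality $\tau(I_k \join H) \leq k\sizeof{V(H)} - \phi_k(H)$ are proved exactly as in the paper (the triangle family decomposes by apex into matchings of $H$ and vice versa; the cover built from a $k$-optimal $D$ is the same edge set $E(H[D])$ plus the spokes to $V(H)-D$). The one place you genuinely diverge is the lower bound on $\tau$. The paper symmetrizes the optimal cover $X$: it picks $v^*\in I_k$ minimizing $\sizeof{C_{v^*}}$ where $C_v=\{w\st vw\in X\}$, replaces $X$ by $X_1=(X\cap E(H))\cup\{vw\st w\in C_{v^*}\}$, checks that $X_1$ is still a cover of size at most $\sizeof{X}$, and then reads off $D=V(H)-C_{v^*}$ from $E(H[D])\subset X_1$. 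You instead leave $X$ untouched and average: each $S_v=V(H)-C_v$ satisfies $E(H[S_v])\subseteq X\cap E(H)$ (your ``pointwise bound,'' which is immediate, not an obstacle --- any surviving edge of $H[S_v]$ would complete a surviving triangle with apex $v$), and summing $\phi_k(S_v)=k\sizeof{S_v}-\sizeof{E(H[S_v])}$ over the $k$ apexes gives at least $k(k\sizeof{V(H)}-\sizeof{X})$, so some $S_{v^*}$ witnesses $\phi_k(H)\geq k\sizeof{V(H)}-\sizeof{X}$; the arithmetic checks out. The two arguments share the same core observation (a non-covered $H$-edge inside $S_v$ yields a surviving triangle), but your averaging version is marginally cleaner in that it never has to verify that a modified edge set is still a triangle cover, while the paper's version has the minor side benefit of exhibiting an explicit optimal cover of the canonical form used in the upper bound.
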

\begin{proof}
  Let $G = I_k \join H$.  We first show that $\nu(G) = \apk(H)$. Let
  $\sey$ be a maximum set of edge disjoint triangles in $G$.  For each
  $v \in I_k$, let $\sey_v = \{T \in \sey \st v \in T\}$. Since each
  triangle in $G$ consists of exactly one vertex of $I_k$ together
  with an edge in $H$, we can write $\sey$ as the disjoint union $\sey
  = \bigcup_{v \in I_k}\sey_v$. Since the triangles in $\sey_v$ are
  edge-disjoint, no two triangles in $\sey_v$ can share a common
  vertex $w \in V(H)$: if this were the case, they would intersect in
  the edge $vw$. Hence the edges of $\sey_v$ that lie in $H$ form a
  matching $M_v$ in $H$. Since the triangles in $\sey$ are
  edge-disjoint, it follows that the matchings $M_v$ are pairwise
  disjoint, so $\bigcup_{v \in I_k}M_v$ is a $k$-edge-colorable
  subgraph of $H$ having size $\nu(G)$. Therefore, $\nu(G) \leq
  \apk(H)$.

  On the other hand, if $H_0$ is a maximum $k$-edge-colorable subgraph of $H$,
  then we can write $E(H_0) = M_1 \cup \cdots \cup M_k$, where each $M_i$ is
  a matching. Let $v_1, \ldots, v_k$ be the vertices of $I_k$, and for $i \in [k]$,
  let $\sey_i = \{v_iwz \st wz \in M_i\}$. Now $\bigcup_{i \in [k]}\sey_i$ is a family
  of $\apk(H)$ pairwise edge-disjoint triangles in $G$, so $\nu(G) \geq \apk(H)$.

  Next we show that $\tau(G) = k\sizeof{V(H)} - \phi_k(H)$. Let $D$ be a $k$-optimal
  subset of $V(H)$, and define an edge set $X$ by
  \[ X = E(G[D]) \cup \{vw \st \text{$v \in I_k$ and $w \in
    V(H)-D$}\}. \] Clearly, $\sizeof{X} = \sizeof{E(G[D])} +
  k(\sizeof{V(H)}-\sizeof{D})$, which rearranges to $\sizeof{X} =
  k\sizeof{V(H)} - \phi_k(H)$, since $D$ is $k$-optimal.  We claim that
  $G-X$ is triangle-free. Let $T$ be any triangle in $G$; we may write
  $T=uvw$, where $uw \in E(H)$ and $v \in I_k$. If $u \notin D$, then
  $vu \in X$, and likewise for $w$. On the other hand, if $u,w \in D$,
  then $uw \in X$. Hence $G-X$ is triangle-free, so $\tau(G) \leq k\sizeof{V(H)} - \phi_k(H)$.

  Conversely, let $X$ be a minimum edge set such that $G-X$ is
  triangle-free. For each $v \in I_k$, let $C_v = \{w \in V(H) \st vw
  \in X\}$.  We transform $X$ so that all the sets $C_v$ are equal:
  pick $v^* \in I_k$ to minimize $\sizeof{C_{v^*}}$, and define $X_1$
  by
  \[ X_1 = (X \cap E(H)) \cup \{vw \st w \in C_{v^*}\}. \]
  Now $G-X_1$ is triangle-free: if $vwz$ is a triangle in $G-X_1$, then $v^*wz$
  is a triangle in $G-X$, contradicting the assumption that $G-X$ is
  triangle-free. Furthermore, by the minimality of $\sizeof{C_{v^*}}$, we
  have $\sizeof{X_1} \leq \sizeof{X}$.

  Therefore, $\sizeof{X_1} = \sizeof{X} = \tau(G)$. Let $D = V(H) - C_{v^*}$.
  Since $G-X_1$ is triangle-free, we have $E(G[D]) \subset X_1$, and so
  \[ \sizeof{X_1} \geq \sizeof{E(G[D])} + k\sizeof{V(H) - D} =
  k\sizeof{V(H)} - \phi_k(D). \] As $\phi_k(D) \leq
  \phi_k(H)$, we conclude that
  \[ \tau(G) = \sizeof{X_1} \geq k\sizeof{V(H)} - \phi_k(H).\qedhere\]
\end{proof}
\begin{theorem}\label{thm:alphi}
  For any graph $G$ and any $k \geq 1$, $2\alpha'_k(G) \geq k\sizeof{V(G)} - \phi_k(G)$.
\end{theorem}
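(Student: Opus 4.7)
The plan is to let $M$ be a maximum $k$-edge-colorable subgraph of $G$, so that $|E(M)| = \alpha'_k(G)$, and to use the deficient set $F = \{v \in V(G) \st d_M(v) < k\}$ itself as the test subset in the definition of $\phi_k(G)$. Since $M$ is maximum it is in particular maximal, so Theorem~\ref{thm:simple-main} applies to $M$ and will supply the bound on $|E(G[F])|$ needed to finish.

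First I would do the elementary accounting: since $M$ is $k$-edge-colorable in a simple graph, $d_M(v) \leq k$ for every $v$, with equality for every $v \notin F$. Consequently,
\[ 2\alpha'_k(G) = \sum_{v \in V(G)} d_M(v) = k(|V(G)|-|F|) + \sum_{v \in F} d_M(v), \]
which rearranges to $k|V(G)| - 2\alpha'_k(G) = k|F| - \sum_{v \in F} d_M(v)$. Next, Theorem~\ref{thm:simple-main} applied at each $v \in F$ gives $d_F(v) \leq d_M(v)$; summing over $F$ yields $2|E(G[F])| \leq \sum_{v \in F} d_M(v)$. Combining this with $\phi_k(G) \geq \phi_k(F) = k|F| - |E(G[F])|$ produces
\[ \phi_k(G) \;\geq\; k|F| - \tfrac{1}{2}\sum_{v \in F} d_M(v) \;\geq\; k|F| - \sum_{v \in F} d_M(v) \;=\; k|V(G)| - 2\alpha'_k(G), \]
which rearranges to the desired inequality.

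The only possible obstacle is conceptual rather than technical: one has to recognize that the deficient set $F$ of a \emph{maximum} $k$-edge-colorable subgraph is the right set to plug into $\phi_k(G)$, and that Theorem~\ref{thm:simple-main} is precisely the tool that controls the induced edge count $|E(G[F])|$ through the vertex inequalities $d_F(v) \leq d_M(v)$. After that insight, everything is a short algebraic manipulation, and the one subtlety is the harmless sign step $-\tfrac{1}{2}\sum_{v \in F} d_M(v) \geq -\sum_{v \in F} d_M(v)$, which holds because $d_M(v) \geq 0$.
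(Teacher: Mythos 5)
Your proposal is correct and follows essentially the same route as the paper: take a maximal (in your case maximum) $k$-edge-colorable subgraph $M$, apply the degree-sum formula, use Theorem~\ref{thm:simple-main} to get $\sum_{v\in F} d_F(v) \le \sum_{v\in F} d_M(v)$, and plug the deficient set $F$ into $\phi_k$. Your ``harmless sign step'' dropping the factor $\tfrac12$ is exactly the paper's slack in passing from $\sum_{v\in F}d_F(v) = 2\sizeof{E(G[F])}$ down to $\sizeof{E(G[F])}$.
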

\begin{proof}
  Let $M$ be a maximal $k$-edge-colorable subgraph of $G$, and let
  $F = \{v \in V(G) \st d_M(v) < k\}$. By the degree-sum formula
  and Theorem~\ref{thm:simple-main}, we have
  \begin{align*}
    2\sizeof{E(M)} &= \sum_{v \in V(G)}d_M(v) \\
    &= k\sizeof{V(G)} - k\sizeof{F} + \sum_{v \in F}d_M(v) \\
    &\geq k \sizeof{V(G)} - k\sizeof{F} + \sum_{v \in F}d_F(v) \\
    &\geq k\sizeof{V(G)} - k\sizeof{F} + \sizeof{E(G[F])}\\
    &= k\sizeof{V(G)} - \phi_k(F) \geq k\sizeof{V(G)} - \phi_k(G). \qedhere
  \end{align*}
\end{proof}
\begin{corollary}
  If $H$ is triangle-free and $k \geq 1$, then $\tau(I_k \join H) \leq 2\nu(I_k \join H)$.
\end{corollary}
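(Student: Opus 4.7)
The plan is to combine the two previous results in this section: Theorem~\ref{thm:tuzaconnection}, which translates the packing and covering quantities on $I_k \join H$ into the edge-coloring invariants $\alpha'_k(H)$ and $\phi_k(H)$, and Theorem~\ref{thm:alphi}, which directly relates $\alpha'_k$ and $\phi_k$ via the bound $2\alpha'_k(G) \geq k|V(G)| - \phi_k(G)$. Both theorems are stated without any triangle-freeness assumption on the second argument; triangle-freeness of $H$ is needed only so that Theorem~\ref{thm:tuzaconnection} applies.

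First I would apply Theorem~\ref{thm:alphi} to the graph $H$, obtaining
\[ 2\alpha'_k(H) \geq k\sizeof{V(H)} - \phi_k(H). \]
Then, using the hypothesis that $H$ is triangle-free, I would invoke Theorem~\ref{thm:tuzaconnection} to rewrite the left-hand side as $2\nu(I_k \join H)$ and the right-hand side as $\tau(I_k \join H)$. Chaining the (in)equalities yields
\[ 2\nu(I_k \join H) = 2\alpha'_k(H) \geq k\sizeof{V(H)} - \phi_k(H) = \tau(I_k \join H), \]
which is the claim.

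Since the argument is essentially a one-line composition, there is no substantive obstacle: all the hard work has already been done in Theorem~\ref{thm:alphi} (which is where Theorem~\ref{thm:simple-main} enters through the analysis of a maximal $k$-edge-colorable subgraph $M$ of $H$ and its deficient vertex set $F$) and in Theorem~\ref{thm:tuzaconnection} (which established the triangle-packing/edge-coloring correspondence for join graphs). The only thing to check is that the hypothesis ``$H$ triangle-free'' is used in the right place, namely in applying Theorem~\ref{thm:tuzaconnection}; Theorem~\ref{thm:alphi} itself holds for arbitrary $H$.
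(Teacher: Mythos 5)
Your proof is correct and is exactly the argument the paper intends: the corollary is stated immediately after Theorem~\ref{thm:alphi} precisely because it follows by combining $\tau(I_k \join H) = k\sizeof{V(H)} - \phi_k(H) \leq 2\alpha'_k(H) = 2\nu(I_k \join H)$, using Theorem~\ref{thm:tuzaconnection} (which is where triangle-freeness of $H$ is needed) and Theorem~\ref{thm:alphi}. No differences to report.
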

The problem of finding lower bounds on $\alpha'_k(G)$ has been studied
by several authors
\cite{maxcolor1,maxcolor2,maxcolor3,maxcolor4,maxcolor5}, usually with
the goal of finding approximation algorithms. While
Theorem~\ref{thm:alphi} gives a lower bound on $\alpha'_k(G)$, the
same bound applies even for ``small'' maximal $k$-edge-colorable
subgraphs of $G$, and therefore typically will not be sharp.

We close this section with a conjecture concerning $k$-optimal sets
which would furnish an alternative proof of Theorem~\ref{thm:alphi}.
First consider the case $k=1$. A $1$-optimal set in a graph $G$
is just a maximum independent set of $G$, and a $1$-edge-colorable
subgraph is just a matching. The following theorem of Berge therefore relates
$1$-optimal sets and $1$-edge-colorable subgraphs of $G$.
\begin{theorem}[Berge~\cite{Berge}]\label{thm:berge}
  An independent set $D$ is a maximum independent set if and only if,
  for every independent set $T$ disjoint from $D$, there is a matching
  of $T$ into $D$.
\end{theorem}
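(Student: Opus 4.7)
The plan is to prove each direction by a short extremal argument built on bipartite matching. For the forward direction, suppose $D$ is a maximum independent set and let $T$ be an independent set disjoint from $D$. I would form the bipartite subgraph $B$ with parts $T$ and $D$ consisting of the $G$-edges with one endpoint in each, and apply Hall's theorem to produce the desired matching saturating $T$. To verify Hall's condition, I would assume toward contradiction that some $S \subseteq T$ satisfies $\sizeof{N_B(S)} < \sizeof{S}$, and consider the set $D' = (D \setminus N_B(S)) \cup S$. The set $D \setminus N_B(S)$ is independent as a subset of $D$, the set $S$ is independent as a subset of $T$, and by the choice of $N_B(S)$ no $G$-edge joins $S$ to $D \setminus N_B(S)$; so $D'$ is independent. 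But $\sizeof{D'} = \sizeof{D} - \sizeof{N_B(S)} + \sizeof{S} > \sizeof{D}$, contradicting the maximality of $D$.

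For the converse, suppose $D$ is independent and has the stated matching property, and let $D^\star$ be any independent set of $G$. Put $T = D^\star \setminus D$; then $T$ is independent and disjoint from $D$, so the hypothesis supplies a matching $M$ saturating $T$ into $D$. Each $t \in T$ is matched by $M$ to some $d_t \in D$, and since $t \in D^\star$ and $D^\star$ is independent, the adjacent vertex $d_t$ cannot lie in $D^\star$; hence $d_t \in D \setminus D^\star$. As the $d_t$ are distinct, this gives $\sizeof{D^\star \setminus D} \leq \sizeof{D \setminus D^\star}$, which rearranges to $\sizeof{D^\star} \leq \sizeof{D}$, so $D$ is maximum.

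The only subtle point in either direction is the independence check for $D'$ in the Hall argument: one must use the hypothesis $T \cap D = \emptyset$ so that checking independence between $S$ and $D \setminus N_B(S)$ reduces to ruling out edges across the bipartition $(T,D)$, which is exactly what the definition of $N_B(S)$ gives. Everything else reduces to routine Hall-theorem bookkeeping, and I do not expect any substantive obstacle.
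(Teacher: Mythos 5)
Your proof is correct. Note that the paper does not actually prove this statement --- it is quoted as a known result of Berge and used only to derive Corollary~\ref{cor:covermatching} --- so there is no in-paper argument to compare against. Both directions of your argument check out: in the forward direction, the exchange set $D' = (D \setminus N_B(S)) \cup S$ is indeed independent and strictly larger than $D$ when Hall's condition fails (the disjointness $T \cap D = \emptyset$ guarantees both $\sizeof{D'} = \sizeof{D} - \sizeof{N_B(S)} + \sizeof{S}$ and that the only possible edges to rule out are the cross edges captured by $N_B(S)$), and in the converse the injection $t \mapsto d_t$ lands in $D \setminus D^\star$ by independence of $D^\star$, giving $\sizeof{D^\star \setminus D} \leq \sizeof{D \setminus D^\star}$ and hence $\sizeof{D^\star} \leq \sizeof{D}$. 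This is the standard Hall's-theorem proof of Berge's result.
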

\begin{corollary}\label{cor:covermatching}
  If $D$ is a maximum independent set in a graph $G$, then $G$ has
  a matching that covers every vertex of $V(G)-D$.
\end{corollary}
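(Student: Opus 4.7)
The plan is to combine a carefully chosen matching inside $G[V(G)\setminus D]$ with a direct application of Theorem~\ref{thm:berge}. Concretely, I would take $M_0$ to be a maximum matching of the induced subgraph $G[V(G)\setminus D]$, and let $U = (V(G)\setminus D)\setminus V(M_0)$ denote the vertices of $V(G)\setminus D$ that $M_0$ leaves exposed. The first step is the standard observation that $U$ is independent in $G$: any $G$-edge inside $U$ lies entirely in $V(G)\setminus D$, hence is an edge of $G[V(G)\setminus D]$, and could be added to $M_0$, contradicting its maximality in that induced subgraph. Note also that $U$ is disjoint from $D$.

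Since $U$ is an independent set disjoint from $D$, Theorem~\ref{thm:berge} supplies a matching $M_1$ of $U$ into $D$. The second step is to observe that $M_0$ and $M_1$ use disjoint vertex sets: $V(M_0)\subseteq (V(G)\setminus D)\setminus U$ by construction, while $V(M_1)\subseteq U\cup D$, so the two vertex sets cannot meet. Consequently $M_0\cup M_1$ is a matching of $G$, and it covers $(V(G)\setminus D)\setminus U$ by way of $M_0$ together with $U$ by way of $M_1$, which jointly exhaust $V(G)\setminus D$.

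I do not expect a substantial obstacle here: the corollary is essentially a direct packaging of Theorem~\ref{thm:berge} with the elementary fact that a maximum matching leaves an independent set exposed. The only subtle point worth flagging in the write-up is that $M_0$ must be chosen as a maximum matching of the induced subgraph $G[V(G)\setminus D]$, not of $G$ itself; this ensures both that the exposed set $U$ lies entirely in $V(G)\setminus D$ and that it is independent, so that it qualifies as the set $T$ in the forward direction of Berge's theorem.
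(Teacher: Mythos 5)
Your proposal is correct and follows essentially the same route as the paper: a maximal (you use maximum, which also works) matching inside $G[V(G)-D]$ leaves an independent exposed set, which Theorem~\ref{thm:berge} then matches into $D$. The two matchings are vertex-disjoint for exactly the reason you give, so no further comment is needed.
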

\begin{proof}
  Let $M_1$ be a maximal matching in $V(G)-D$, and let $S$ be the set
  of vertices in $V(G)-D$ not saturated by $M_1$. Since $M_1$ is a
  maximal matching, $S$ is an independent set. By
  Theorem~\ref{thm:berge}, there is a matching $M_2$ of $S$ into
  $D$. Thus, $M_1 \cup M_2$ is a matching that covers $V(G)-D$.
\end{proof}
Corollary~\ref{cor:covermatching} suggests the following generalization
to higher values of $k$.
\begin{conjecture}\label{coj:kcover}
  If $D$ is a $k$-optimal set in a graph $G$, then $G$ has
  a $k$-edge-colorable subgraph $M$ such that $d_M(v) = k$
  for all $v \in V(G)-D$.
\end{conjecture}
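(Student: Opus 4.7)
The plan is to mimic the proof of Corollary~\ref{cor:covermatching} (the $k=1$ case of this conjecture), with Berge's Theorem~\ref{thm:berge} replaced by an appropriate $k$-analog. Let $D$ be a $k$-optimal set of $G$. I would first take a maximal $k$-edge-colorable subgraph $M_1$ of $G[V(G)-D]$ together with a fixed proper $k$-edge-coloring $\psi_1$, and set $S = \{v \in V(G)-D \st d_{M_1}(v) < k\}$. By Corollary~\ref{cor:maxdelta} applied to $G[V(G)-D]$, the set $S$ is $k$-dependent, and each $v \in S$ has a set $O(v) \subseteq [k]$ of exactly $k-d_{M_1}(v)$ colors missing at $v$ under $\psi_1$. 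Favaron's theorem that every $k$-optimal set is $k$-dominating additionally gives $\sizeof{N(v) \cap D} \geq k$ for every $v \in S$.

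To complete the proof it would suffice to construct a bipartite subgraph $M_2$ of $G[S,D]$ with $d_{M_2}(v) = k-d_{M_1}(v)$ for every $v \in S$ and $d_{M_2}(u) \leq k$ for every $u \in D$, together with a proper edge coloring in $[k]$ assigning each edge $vu \in M_2$ (with $v \in S$) a color in $O(v)$; then $M_1 \cup M_2$ would be the subgraph required by the conjecture. I would attack the existence of the degree-constrained $M_2$ by contradiction using the defect K\"onig--Hall theorem: if no such $M_2$ exists, there exist $S' \subseteq S$ and $T^* \subseteq D$ with $\sum_{v \in S'}(k-d_{M_1}(v)) > k\sizeof{T^*} + \sizeof{E(S', D \setminus T^*)}$. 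Combined with the domination bound $\sizeof{E(S', D)} \geq k\sizeof{S'}$, this forces $\sizeof{E(S', T^*)} > k\sizeof{T^*}$, and one then hopes to show via a direct calculation that the swapped set $D' = (D \setminus T^*) \cup S'$ satisfies $\phi_k(D') > \phi_k(D)$, using the $k$-dependence bound $\sizeof{E(G[S'])} \leq (k-1)\sizeof{S'}/2$ to close the inequality.

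The main obstacle, and the reason this conjecture appears significantly harder than its $k=1$ case, is twofold. First, a direct expansion of $\phi_k(D')-\phi_k(D)$ in the swap above does not immediately yield a strict inequality without extra slack, so the subset $T^*$ may need to be replaced by a more carefully chosen $T^{**} \subseteq T^*$ to extract a contradiction, or the swap may need to be iterated. Second, even granting the existence of $M_2$ with the correct degrees, one still must color $M_2$ compatibly with $\psi_1$, which amounts to a list edge coloring of the bipartite graph $M_2$ with tight lists $O(v)$ of size exactly $d_{M_2}(v)$ at each $v \in S$ and full lists $[k]$ at each $u \in D$. This color-compatibility step is vacuous when $k=1$, since a matching requires no coloring; for $k \geq 2$ it would presumably require iterative $[\alpha,\beta]$-alternating-path exchanges in the spirit of the proof of Theorem~\ref{thm:main}, and establishing the termination and correctness of such an exchange procedure is the critical open point of the approach.
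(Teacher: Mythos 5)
The statement you were asked to prove is not proved in the paper: it is stated as Conjecture~\ref{coj:kcover} and left open. The paper establishes only the weaker Proposition that follows it, in which $M$ is required merely to have maximum degree at most $k$ (via Lov\'asz's $(g,f)$-factor theorem) rather than to be $k$-edge-colorable. Your submission is, by your own account, a plan rather than a proof, and the two gaps you flag are both genuine, so there is nothing here that settles the conjecture.

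Concretely: the first half of your plan (existence of a degree-constrained bipartite $M_2$) would, if completed, essentially re-derive the paper's Proposition by a different route, but the swap computation as written does not close. Expanding $\phi_k(D')-\phi_k(D)$ for $D'=(D\setminus T^*)\cup S'$ and using the defect inequality leaves a residual term of the form $\sum_{v\in S'}d_{M_1}(v)-\sizeof{E(G[S'])}+\sizeof{E(T^*,D\setminus T^*)}+\sizeof{E(G[T^*])}$, which can be negative (e.g.\ when $M_1$ misses $S'$ entirely but $G[S']$ has edges), so no contradiction is reached without further ideas. The second gap is the essential one and is exactly why this is a conjecture rather than a theorem: passing from a subgraph of maximum degree at most $k$ covering $V(G)-D$ to a $k$-edge-\emph{colorable} such subgraph is a class-one-versus-class-two obstruction. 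Your proposed reduction to list edge coloring of $M_2$ does not help as stated, since the list at a vertex $v\in S$ has size $d_{M_2}(v)$ rather than $\max(d_{M_2}(v),d_{M_2}(u))$ over edges $vu$, so Galvin-type results do not apply, and keeping $\psi_1$ fixed is in any case too rigid --- one expects to have to recolor $M_1$ as well. Absent a resolution of these points, the correct conclusion is that the statement remains open, as the paper says.
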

Conjecture~\ref{coj:kcover} would be, in some sense, a converse to
Corollary~\ref{cor:maxdelta}, which states that for every maximal
$k$-edge-colorable subgraph $M$, the set of vertices having $M$-degree
less than $k$ is a $k$-dependent set.  We also remark that Lovasz's
$(g,f)$-factor theorem~\cite{Lovasz-gf} implies the following weaker
version of Conjecture~\ref{coj:kcover}.
\begin{proposition}
  If $D$ is a $k$-optimal set in a graph $G$, then $G$ has a subgraph
  $M$ of maximum degree at most $k$ such that $d_M(v) = k$ for all $v
  \in V(G)-D$.
\end{proposition}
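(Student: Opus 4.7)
The plan is to invoke Lovász's $(g,f)$-factor theorem with the following setup: define $f(v) = k$ for every $v \in V(G)$, and define $g(v) = k$ for $v \in V(G) - D$ and $g(v) = 0$ for $v \in D$. A $(g,f)$-factor in $G$ is a spanning subgraph $M$ with $g(v) \leq d_M(v) \leq f(v)$ for every $v$, which in our case is precisely a subgraph $M$ of maximum degree at most $k$ in which $d_M(v) = k$ on $V(G) - D$. So the entire task is to verify the hypotheses of Lovász's theorem.

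Lovász's theorem reduces the existence of a $(g,f)$-factor to showing that for every pair of disjoint sets $S, T \subseteq V(G)$ one has
\[
\gamma_G(S,T) \;:=\; \sum_{v \in S} f(v) \;-\; \sum_{v \in T} g(v) \;+\; \sum_{v \in T} d_{G-S}(v) \;-\; q_G(S,T) \;\geq\; 0,
\]
where $q_G(S,T)$ counts the components $C$ of $G - (S \cup T)$ with $g \equiv f$ on $V(C)$ (equivalently, $V(C) \subseteq V(G) - D$) satisfying the parity condition $e_G(V(C),T) + k\sizeof{V(C)} \equiv 1 \pmod 2$. Assume for contradiction that $\gamma_G(S,T) < 0$ for some disjoint $S, T$. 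I will use $S$ and $T$ to build a set $D'$ with $\phi_k(D') > \phi_k(D)$, contradicting $k$-optimality of $D$. The natural candidate is $D' = (D \cup S) \setminus T$; a direct expansion gives
\[
\phi_k(D') - \phi_k(D) = k\bigl(\sizeof{S \setminus D} - \sizeof{T \cap D}\bigr) - \bigl(\sizeof{E(G[D'])} - \sizeof{E(G[D])}\bigr),
\]
and the edge-count difference can be rewritten in terms of $e_G(S,T)$ and edges inside $S$, $T$, and between these sets and $D$. The goal is to match the surviving terms with the negative quantity $-\gamma_G(S,T)$ using the identity $\sum_{v \in T} d_{G-S}(v) = 2\sizeof{E(G[T])} + e_G(T, V(G) - S - T)$ and the splitting of $V(G) - S - T$ according to membership in $D$.

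The main obstacle will be the parity-component term $q_G(S,T)$. Components of $G - (S \cup T)$ lying inside $V(G) - D$ contribute potentially $1$ to $q_G(S,T)$ even though they seemingly do not affect $\phi_k(D')$ directly, so the naive choice of $D'$ only recovers $\gamma_G(S,T) + q_G(S,T) \geq 0$. To absorb the $q$-term, I will modify $D'$ by moving one carefully chosen vertex per offending component $C$: specifically, for each such $C$ I will either add some $v \in V(C)$ to $D'$ or remove one from $D'$ so that the parity condition forces $\phi_k$ to improve by at least $1$ for each unit of $q_G(S,T)$ (this uses that such a $C$ lies entirely in $V(G)-D$ together with the parity constraint on $e_G(V(C),T) + k\sizeof{V(C)}$). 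Combining these local modifications with the earlier estimate yields $\phi_k(D') \geq \phi_k(D) - \gamma_G(S,T) > \phi_k(D)$, the desired contradiction; hence Lovász's condition holds and the required factor $M$ exists.
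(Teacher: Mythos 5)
The paper itself gives no proof of this proposition; it is stated only as a remark following from Lov\'asz's $(g,f)$-factor theorem, so you are using the intended tool, and your choice of $f \equiv k$, $g = k$ on $V(G)-D$, $g=0$ on $D$, together with your statement of the deficiency condition, is correct. The overall strategy---contradict $k$-optimality of $D$ whenever $\gamma_G(S,T)<0$---is also the right one. The gap is in the execution: the competing set $D'=(D\cup S)\setminus T$ is the wrong one, and the inequality you need from it, $\phi_k(D')\geq\phi_k(D)-\gamma_G(S,T)$, is false. Concretely, let $G=K_{1,3}$ with center $c$, let $k=1$, and let $D$ be the set of three leaves, which is $1$-optimal with $\phi_1(D)=3$. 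Taking $S=\{c\}$ and $T=\emptyset$ gives $\gamma_G(S,T)=1$ and $q_G(S,T)=0$, while $D'=V(G)$ has $\phi_1(D')=1<2=\phi_1(D)-\gamma_G(S,T)$. The structural reason is that in the deficiency condition $T$ is the set where the lower bound $g$ is in danger of failing, so the way to improve $\phi_k$ is to \emph{add} $T\setminus D$ to $D$ (relaxing the degree requirement there), not to delete $T$ from $D$; note also that $\gamma_G(S,T)$ contains the term $-k\sizeof{T\setminus D}$, whereas your $D'$ can only produce the unrelated term $-k\sizeof{T\cap D}$. A choice that does work is $D'=(D\setminus S)\cup(T\setminus D)\cup W$, where $W$ consists of one vertex from each component counted by $q_G(S,T)$ that sends no edge to $T$.

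Your treatment of the parity term is likewise only an assertion (``the parity condition forces $\phi_k$ to improve by at least $1$''), and as stated it is not an argument; it also cannot be a uniform local modification applied to every counted component. With the corrected $D'$ the accounting goes through after splitting the counted components into two classes: a component $C$ with $e_G(V(C),T)\geq 1$ is paid for by the edges it sends to $T$, which survive inside $\sum_{v\in T}d_{G-S}(v)$ and are not consumed by the $k$-optimality inequality; a component with $e_G(V(C),T)=0$ has all of its external neighbors in $S$, so any single vertex $w\in V(C)$ satisfies $e_G(w,D'\setminus\{w\})=0$ and adding $w$ to $D'$ increases $\phi_k$ by exactly $k\geq 1$. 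The parity condition is never what produces the gain; it only identifies which components are counted. If you rewrite the proof with $D'=(D\setminus S)\cup(T\setminus D)\cup W$ and expand $\sizeof{E(G[D'])}$ explicitly, the terms cancel against $\gamma_G(S,T)$ and one obtains $\gamma_G(S,T)\geq k\sizeof{S\setminus D}+\sizeof{E(G[T])}\geq 0$, which completes the argument.
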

\section{Theorem~\ref{thm:main} and Vizing's Adjacency Lemma}\label{sec:VAL}
Say that an edge $e$ in a multigraph $G$ is \emph{critical} if
$\chi'(G-e) < \chi'(G)$.  Vizing's Adjacency Lemma~\cite{vizing} was
originally formulated for simple graphs $G$ with
$\chi'(G) = \Delta(G)+1$ such that every edge is critical.  The
following multigraph formulation of Vizing's Adjacency
Lemma~\cite{vizing} was given by
Andersen~\cite{andersen}.
\begin{lemma}[Andersen~\cite{andersen}]\label{lem:VAL}
  Let $G$ be a graph with $\chi'(G) = \max_{v \in V(G)}[d(v) + \mu(v)]$,
  and let $xy$ be a critical edge of $G$. If $t = d(x) + \mu(x,y)$,
  then $y$ has at least $\chi'(G) - t + 1$ neighbors $z$ other
  than $x$ such that $d(z) + \mu(y,z) = \chi'(G)$.
\end{lemma}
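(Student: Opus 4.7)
The plan is to restrict attention to simple graphs, where $\mu\equiv 1$ and the hypothesis $\chi'(G)=\max_v[d(v)+\mu(v)]$ becomes $\chi'(G)=\Delta(G)+1$, and to derive Lemma~\ref{lem:VAL} by applying Theorem~\ref{thm:main} at the vertex $y$ to a single well-chosen maximal $k$-edge-colorable subgraph of $G$. Set $k=\chi'(G)-1=\Delta(G)$ and take $M=G-xy$. Because $xy$ is critical, $M$ is $k$-edge-colorable; because $G$ itself is not, $M$ is maximal (the only edge of $G$ missing from $M$ is $xy$, and adding it back destroys $k$-edge-colorability). Since $d_M(y)=d_G(y)-1\leq\Delta(G)-1<k$, Theorem~\ref{thm:main} applies at $y$.

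Next I identify $\Uk{y}$ and $\Fk{y}$ explicitly. The unique neighbor $w$ of $y$ with $\mu_M(y,w)<\mu_G(y,w)$ is $x$, and a direct check gives $d_M(x)=d_G(x)-1\leq\Delta(G)-1=k-\mu_G(y,x)$, so $x\in\Fk{y}$ and hence $\Uk{y}=\{x\}$. For $w\in N_G(y)\setminus\{x\}$ one has $d_M(w)=d_G(w)$, so the condition $d_M(w)\leq k-1$ is equivalent to $d_G(w)<\Delta(G)$. Letting $A=\{z\in N_G(y)\setminus\{x\}:d_G(z)=\Delta(G)\}$, this gives $\Fk{y}=\{x\}\cup(N_G(y)\setminus(\{x\}\cup A))$ and therefore $\dF{y}=|\Fk{y}|=d_G(y)-|A|$. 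Note that $A$ is exactly the set of neighbors that the lemma asks us to count, since in a simple graph $d(z)+\mu(y,z)=d_G(z)+1$ equals $\chi'(G)=\Delta(G)+1$ precisely when $d_G(z)=\Delta(G)$.

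Plugging into Theorem~\ref{thm:main} and using $\Uk{y}=\{x\}$, the correction term is $k-d_M(x)-\mu_G(y,x)=\Delta(G)-(d_G(x)-1)-1=\Delta(G)-d_G(x)$, so
\[ d_G(y)-|A|=\dF{y}\leq d_M(y)-(\Delta(G)-d_G(x))=(d_G(y)-1)-\Delta(G)+d_G(x). \]
Rearranging yields $|A|\geq\Delta(G)-d_G(x)+1=\chi'(G)-t+1$, since $t=d(x)+\mu(x,y)=d_G(x)+1$.

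The step that does the real work is the identification of the $\Uk{y}$-correction in Theorem~\ref{thm:main}. If one used only Theorem~\ref{thm:simple-main}, the same bookkeeping would give $|A|\geq\chi'(G)-t$, which is off by one from Andersen's lemma; the single term $k-d_M(x)-\mu_G(y,x)$ in the stronger theorem contributes exactly the missing unit. Thus the conceptual obstacle is not any delicate case analysis but recognising that the extra strength of Theorem~\ref{thm:main} over Theorem~\ref{thm:simple-main} is precisely what matches the "$+1$" in Vizing's Adjacency Lemma; once $M=G-xy$ is chosen and $\Uk{y}=\{x\}$ is observed, the inequality drops out by direct substitution.
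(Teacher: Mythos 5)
Your proposal is correct and follows essentially the same route as the paper: both take $k=\Delta(G)$, $M=G-xy$, identify $\Uk{y}=\{x\}$, and extract the ``$+1$'' from the single correction term of Theorem~\ref{thm:main}, with the same restriction to simple graphs that the paper itself makes. The only cosmetic difference is that you count the set $A$ of high-degree neighbors directly, whereas the paper counts $\sizeof{N_G(y)-\Fk{y}} = d_G(y)-\dF{y}$; these are the same computation.
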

We show that the simple graph case of Lemma~\ref{lem:VAL} follows from
Theorem~\ref{thm:main}. However, the fully general multigraph case
requires a more detailed analysis, and in that case it seems we can do
no better than rewording the proof given by Andersen~\cite{andersen};
thus, we consider only simple graphs.
\begin{proof}[Proof of Lemma~\ref{lem:VAL} for simple graphs]
  In the simple graph case, we have $\chi'(G) = \Delta(G)+1$.
  Let $k = \chi'(G) - 1 = \Delta(G)$ and let $M = G-xy$. By hypothesis, $M$ is
  a maximal $k$-edge-colorable subgraph of $G$. For $z \in N_G(y) - \{x\}$,
  we have $d_M(z) = d_G(z)$, so if $d(z) < \Delta(G)$, then
  $z \in \Fk{y}$. Furthermore, $x \in \Fk{y}$, since
  \[ d_M(x) + 1 = d_G(x) \leq k. \] It
  follows that if $z \in N_G(y) - \Fk{y}$, then $z$ is a neighbor of
  $y$ other than $x$ such that $d(z) + \mu(y,z) = \chi'(G)$.  Thus,
  the desired claim follows if we can show that $\sizeof{N_G(y) -
    \Fk{y}} \geq \chi'(G)-t+1$.

  Since $x \in \Fk{y}$ and since $xy$ is the only uncolored edge in the graph,
  we have $\Uk{y} = \{x\}$. Therefore, the conclusion of Theorem~\ref{thm:main}
  yields
  \[ \dF{y} \leq \dM{y} - (k - \dM{x} - 1) = \dM{y} - (k - d_G(x)). \]
  Since $\dG{y} = \dM{y} +1$, this rearranges to
  \[ \dG{y} - \dF{y} \geq k - d_G(x) + 1 = \chi'(G) - t + 1. \]
  Since $G$ is a simple graph, we have $\sizeof{N_G(y) - \Fk{y}} = \dG{y} - \dF{y}$,
  so we are done.
\end{proof}
\section{Acknowldgments}
The author acknowledges support from the IC Postdoctoral Fellowship. The author
also thanks the anonymous referees for their careful reading of the paper
and for their helpful suggestions which improved both the presentation
and the historical accuracy of the paper.
\bibliographystyle{amsplain}
\bibliography{biblio}
\end{document}